\theoremstyle{plain}
\newtheorem{thm}{Theorem}[section]
\newtheorem{cor}[thm]{Corollary}
\newtheorem{remark}[thm]{Remark}
\theoremstyle{definition}
\newtheorem*{ack}{Acknowledgments}
\newtheorem{defn}[thm]{Definition}
\newtheorem{rem}[thm]{Remark}
\numberwithin{equation}{section}
\newcommand{\al}{\alpha}
\newcommand{\be}{\beta}
\newcommand{\de}{\delta}
\newcommand{\e}{\varepsilon}
\newcommand{\la}{\lambda}
\newcommand{\m}{\mu}
\newcommand{\om}{\omega}
\newcommand{\Om}{\Omega}
\newcommand{\La}{\Lambda}
\newcommand{\R}{\mathbb{R}}
\newcommand{\N}{\mathbb{N}}
\newcommand{\ACq}{AC}
\newcommand{\AC}{AC^{(n)}(\Omega,\mathbb{R}^l)}
\newcommand{\ACH}{AC_H^{(n)}(\Omega,\mathbb{R}^l)}
\newcommand{\LL}{\mathcal{L}}
\newcommand{\po}{\text{-}}
\newcommand{\lip}{\mbox{Lip}}
\newcommand{\dist}{\mbox{dist}}
\newcommand{\inter}{\mbox{Int}}
\newcommand{\diam}{\mbox{diam}}
\newcommand{\leb}{\mathcal{L}}
\newcommand{\linspan}{\operatorname{lin\ span}}
\providecommand{\osc}{\mathop{\rm osc}\nolimits}
\providecommand{\dist}{\mathop{\rm dist}\nolimits}
\providecommand{\supp}{\mathop{\rm supp}\nolimits}
\providecommand{\diam}{\mathop{\rm diam}\nolimits}
\providecommand{\lip}{\mathop{\rm lip}\nolimits}
\providecommand{\sgn}{\mathop{\rm sgn}\nolimits}
\newcommand{\lb}{\label}
\newcommand{\lra}{\longrightarrow}
\newcommand{\buo}{without loss of generality }
\newcommand{\buoo}{without loss of generality}
\newcommand{\Buo}{Without loss of generality}
\newcommand{\DEF}{\buildrel {\mbox{\tiny def}}\over =}
\begin{document}

\title[Interval based generalizations of absolute continuity]{On interval based generalizations of absolute continuity for functions on $\R^n$}



\author{Michael Dymond}

\address{School of Mathematics\\
University of Birmingham\\
Birmingham, B15 2TT, UK}

\email{\texttt{dymondm@maths.bham.ac.uk}}

\author{Beata~Randrianantoanina}

\address{Department of Mathematics\\
Miami University\\
Oxford, OH 45056, USA}

\email{\texttt{randrib@miamioh.edu}}

\author{Huaqiang~Xu}

\address{Department of Mathematics\\
Miami University\\
Oxford, OH 45056, USA}

\email{\texttt{xuhuaqiang1990@gmail.com}}




\begin{abstract}
We study  notions of   absolute continuity 
for functions defined on $\mathbb{R}^n$similar to the notion of  $\alpha$-absolute continuity in the  sense of Bongiorno. 
We confirm a conjecture  of Mal\'y that 1-absolutely continuous functions
do not need to be differentiable a.e., and we show several other  pathological examples of functions in this class.  We establish containment relations of the class $1\po AC_{\rm WDN}$ which  consits of all functions in $1\po AC$ which are in the Sobolev space $W^{1,2}_{loc}$,  are differentiable a.e. and satisfy the Luzin (N) property, with previously studied classes of absolutely continuous functions.
\end{abstract}

\maketitle

\section{Introduction}
\label{intro}

The classical Vitali's definition says that when $\Om\subseteq \R$, a function $f:\Om\lra \R$  is {\it absolutely continuous}  if for all $\e>0$, there exists $\de>0$ so that for every finite collection of disjoint intervals $\{[a_i,b_i]\}_{i=1}^k \subset \Om$ we (below $\LL^n$  denotes the Lebesgue measure on $\R^n$)
\begin{equation}\lb{hypp}
\sum_{i=1}^k \LL^1[a_i,b_i]<\de  \Rightarrow \sum_{i=1}^{k} |f(a_i)-f(b_i)|<\e.
\end{equation}

The study of the space of absolutely continuous functions on $[0,1]$ and their generalizations to domains in $\R^n$ is connected to the problem of finding regular subclasses of Sobolev spaces which goes back to Cesari and Calder{\'o}n \cite{C41,Cal51}. On the other hand, the Banach space of generalized absolutely continuous functions on $[0,1]$ is closely related to the famous James space, and it is an example of a separable space   not containing $\ell_1$ but with a non-separable dual \cite{LS75,K84}. Moreover it has a very rich subspace structure \cite{AAK11,AMP03}, but several questions about the Banach space structure of this space remain open, see  \cite{AAK11}.

There are several natural ways of generalizing the definition of  absolute continuity for functions of several variables (cf. \cite{CA1933,Hobson,RR,Ziemer,AAK11}).

One approach is to replace  the intervals in the antecedent  of \eqref{hypp} by balls in $\R^n$ and differences in the conclusion of \eqref{hypp} by oscillations of $f$ on the images of balls from \eqref{hypp}. This approach goes back to Banach, Vitali and Tonelli \cite{Banach1925,Vitali1926,Tonelli1926} (cf.     \cite{Hobson}).
More recently  Mal\'y \cite{Maly1999} suggested another fruitful approach which is to replace 
the intervals in the antecedent  of \eqref{hypp} by balls of a selected norm in $\R^n$ and replace sums in the conclusion of \eqref{hypp}  by sums of oscillations raised to the power equal to the dimension of the domain space. This generalized notion gives functions in the Sobolev space $W^{1,n}_{loc}(\Om)$, when $\Om\subset \R^n$, and it
 has been extensively studied by Mal\'y, Cs\"ornyei, Hencl and Bongiorno \cite{Csornyei2000,Hencl2002,Hencl2003,HenclandMaly2003,Bongiorno2009a}.  Cs\"ornyei \cite{Csornyei2000} proved that this notion does depend on the shape of the balls substituted for intervals in  \eqref{hypp}. Thus the incomparable classes $\mathcal{Q}\po\ACq$ and $\mathcal{B}\po\ACq$ are defined where cubes (i.e. balls in the $\ell^n_{\infty}$-norm) or Euclidean balls are used, respectively. Hencl \cite{Hencl2002} introduced  a shape-independent class $\ACq_{H}$ (see Definition~\ref{qac}) 
which contains both  classes $\mathcal{Q}\po\ACq$ and $\mathcal{B}\po\ACq$ and so that $\ACq_{H}$ is contained in the Sobolev space  $W^{1,n}_{loc}$  and that all functions in $\ACq_{H}$ are differentiable a.e. and satisfy the Luzin (N) property and the change of variable formula.

Bongiorno \cite{Bongiorno2005} introduced another  generalization of Vitali's classical definition  for functions of several variables, which is simultaneously similar to Arzel\`a's notion of bounded variation for functions on $\R^2$, cf. \cite{CA1933}, and to Mal\'y's definition \cite{Maly1999}.

\begin{defn} 
\label{alphaac}(Bongiorno \cite{Bongiorno2005})
Let $0<\al<1$. A function $f:\Om\to\R^l$, where $\Om\subset\R^n$ is open, is said to be
\textit{$\al$-absolutely continuous} (denoted $\al\po\AC$ or   $\al\po\ACq$) if for all $ \varepsilon>0$, 
 there exists $\de>0$, such that for any finite collection of disjoint $\al$-regular intervals 
$\{ [\mathbf{a}_i,\mathbf{b}_i]\subset\Om \}_{i=1}^k$ 
we have 
\begin{equation}\lb{alac}
\sum_{i=1}^k \LL^n ([\mathbf{a}_i,\mathbf{b}_i])<\de \Rightarrow\sum_{i=1}^k|f(\mathbf{a}_i)-f(\mathbf{b}_i)|^n<\e.
\end{equation}
\end{defn}
Here, for $\mathbf{a}\in\R^l$, $|\mathbf{a}|$ denotes the Euclidean norm of $\mathbf{a}$, and we say that  an interval $[\mathbf{a},\mathbf{b}]\DEF \{\mathbf{x}=(x_\nu)_{\nu=1}^n\in\R^n\text{: }a_\nu\leq x_\nu\leq b_\nu,\nu=1,\dots,n\}$ is {\it $\al$-regular} if
$$
\frac{\LL^n([\mathbf{a},\mathbf{b}])}{(\max_\nu|a_\nu-b_\nu|)^n}\geq \al.
$$
Bongiorno \cite{Bongiorno2005} showed that for all $0<\al<1$,
$$
Q\po\AC\subsetneq \al\po\AC \subsetneq \ACH.
$$

In 2012, Mal\'y \cite{Maly} asked us about the properties of absolutely continuous functions in a sense similar to Definition~\ref{alphaac}, but without restriction to $\al$-regular intervals for a specified $0<\al<1$. 
This question led us to the following definitions:
\begin{defn} \label{oldzeroac}
We say that a function $f:\Om \rightarrow \R^l$ ($\Omega \subset \R^n$ open) is {\it $0$-absolutely continuous}, denoted $0\po\AC$ or  $0\po\ACq$, (resp.\! {\it strongly $0$-absolutely continuous}, denoted strong-$0\po\AC$ or  strong-$0\po\ACq$) if for every $ \e > 0$, there exists $\delta>0$, such that for any finite collection of disjoint arbitrary intervals $\{ [\mathbf{a}_i,\mathbf{b}_i] \subset\Omega\}_{i=1}^k  $  we have 
\begin{equation}\lb{0AC}
\sum_{i=1}^k (\max_\nu|\mathbf{a}_{i,\nu}-\mathbf{b}_{i,\nu}|)^n <\delta\Rightarrow\sum_{i=1}^k|f(\mathbf{a}_i)-f(\mathbf{b}_i)|^n<\e,
\end{equation}
respectively,
\begin{equation}\lb{strong0AC}
\sum_{i=1}^k \LL^n ([\mathbf{a}_i,\mathbf{b}_i])<\delta\Rightarrow\sum_{i=1}^k|f(\mathbf{a}_i)-f(\mathbf{b}_i)|^n<\e.
\end{equation}
\end{defn}

Note that the antecedent of implication \eqref{0AC} is equivalent to the antecedent of \eqref{alac}, since intervals in \eqref{alac} are $\al$-regular for a fixed $\al$. The antecedent of \eqref{strong0AC} is much weaker since there is no assumption of $\al$-regularity of intervals.

We show that, when $n\ge 2$, the condition \eqref{strong0AC} characterizes constant functions, and   \eqref{0AC} characterizes Lipschitz functions (Theorem~\ref{0aceqcons}). 

The main goal of this paper is to study an analog of Bongiorno's notion for $\al=1$.

\begin{defn} \label{oneac}
We say that a function $f:\Om \rightarrow \R^l$ ($\Om \subset \R^n$ open) is {\it $1$-absolutely continuous}, denoted $1\po\AC$  or $1\po\ACq$,  if  for every $ \e > 0$, there exists $\delta>0$, such that for any finite collection of disjoint $1$-regular intervals $\{[\mathbf{a}_i,\mathbf{b}_i] \subset\Om\}_{i=1}^k $ we have $$\sum_{i=1}^k \LL^n ([\mathbf{a}_i,\mathbf{b}_i])<\de\Rightarrow \sum_{i=1}^k|f(\mathbf{a}_i)-f(\mathbf{b}_i)|^n<\e.$$
\end{defn}

We show that the class $1\po AC$ is not contained in $AC_H$ or even in the Sobolev space $W^{1,n}_{loc}(\Om)$. We show that, similarly as the Sobolev space  $W^{1,n}(\Om)$, when $\Om\subset\R^n$ and $n>1$, cf. \cite{EG,Maly1995lusin},  the class $1\po AC $ contains functions with  pathological properties such as:

(i) compactly supported, but unbounded,

(ii)  bounded but discontinuous,

(iii)  continuous but nowhere differentiable,

(iv)  differentiable but without the Luzin~(N) property,

(v)  differentiable, with the Luzin~(N) property but not in the class $W^{1,n}_{loc}$.

However we prove that every  function in $1\po AC$ has a directional derivative in the direction $(1,\dots,1)$ at a.e. point of the  domain (Theorem~\ref{diff11}). 

Moreover the class $1\po AC$ is useful for the study of the Bongiorno's classes $\al\po AC$. Namely, in \cite{RXB} it is proved that 
\begin{thm}(\cite[Theorem~3.2]{RXB}) \label{1capH}
For all $0<\al<1$, $$\al\po AC=1\po AC\cap AC_H.$$
\end{thm}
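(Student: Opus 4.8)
\emph{The easy half.} I would first dispatch the inclusion $\al\po AC\subseteq 1\po AC\cap AC_H$. Since $1\ge\al$, every $1$-regular interval is $\al$-regular, so any disjoint finite family of $1$-regular intervals is an admissible test family in Definition~\ref{alphaac}; hence the defining implication of $\al\po AC$ specializes word for word to that of $1\po AC$, giving $\al\po AC\subseteq 1\po AC$. Combining this with Bongiorno's inclusion $\al\po AC\subseteq AC_H$ \cite{Bongiorno2005} gives $\al\po AC\subseteq 1\po AC\cap AC_H$, and reduces the theorem to the reverse inclusion $1\po AC\cap AC_H\subseteq\al\po AC$, where all the content lies.

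\emph{Set-up for the reverse inclusion.} Fix $f\in 1\po AC\cap AC_H$ and $\e>0$, and let $\{R_i=[\mathbf{a}_i,\mathbf{b}_i]\}_{i=1}^k$ be a disjoint family of $\al$-regular intervals with $\sum_i\LL^n(R_i)<\de$, where $\de$ is to be fixed in terms of $\e$, $\al$, $n$ and the constants furnished by the two hypotheses. The geometric input is that $\al$-regularity forces each side length of $R_i$ to lie in $[\al\ell_i,\ell_i]$, with $\ell_i$ the longest side, so an axis-parallel grid cuts $R_i$ into a number $M=M(\al,n)$, bounded in terms of $\al$ and $n$, of congruent sub-boxes of eccentricity $<2$, still with $\sum_{i,j}\LL^n(\text{sub-box}_{i,j})<\de$. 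In each sub-box one places its largest inscribed cube in a corner: the cube accounts for the ``diagonal part'' of the displacement across that sub-box, and the short leftover displacement is absorbed into the oscillation of $f$ over a small residual box $G\subseteq R_i$. Chaining the inscribed cubes and the residual boxes along a monotone polygonal path joining $\mathbf{a}_i$ to $\mathbf{b}_i$ inside $R_i$, telescoping, and using the power-mean inequality, one is left with an estimate of the form
\[\sum_i|f(\mathbf{b}_i)-f(\mathbf{a}_i)|^n\le C(\al,n)\Big(\sum_{i,s}|f(\mathbf{c}^+_{i,s})-f(\mathbf{c}^-_{i,s})|^n+\sum_{i,r}\osc(f,G_{i,r})^n\Big),\]
where $\mathbf{c}^\pm_{i,s}$ are the two extreme vertices of an inscribed cube $Q_{i,s}$; it remains to make both right-hand sums small.

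\emph{The two estimates; the main obstacle.} The cube sum is controlled by $1\po AC$: routing the monotone path so that each cube $Q_{i,s}$ is traversed along its main diagonal, and observing that the $Q_{i,s}$ (over all $i,s$) form a disjoint family of $1$-regular intervals of total measure $<\de$, the defining estimate of $1\po AC$ applies once $\de$ is small; a minor point is that consecutive cubes of a monotone chain abut, whereas Definition~\ref{oneac} quantifies over pairwise disjoint intervals, which one circumvents using the continuity $f$ inherits from $AC_H$ to pass to a strictly disjoint perturbation with negligible error. The residual sum $\sum_{i,r}\osc(f,G_{i,r})^n$ is the crux. One would like to feed the $G_{i,r}$ directly into the defining estimate of $AC_H$, but that estimate is \emph{not} available for an arbitrary disjoint family of bounded eccentricity and small total measure --- if it were, one would deduce $AC_H\subseteq\al\po AC$, contradicting $\al\po AC\subsetneq AC_H$. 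The reason is that $AC_H$ is shape-independent only up to a fixed dilation constant, so its oscillation bound applies only to families whose fixed enlargements remain pairwise disjoint inside $\Om$; but the hypothesis gives disjointness of the $R_i$, not separation of (enlargements of) the residual boxes, and for an elongated $\al$-regular interval a residual box is comparable in size to $R_i$ itself and its enlargement can spill into neighbouring intervals. I expect essentially all the difficulty of the theorem to be concentrated here: one must refine the decomposition enough that the residual boxes, rescaled to the relevant local scale, become small enough for the $AC_H$-enlargements to be accommodated, while keeping the total count of pieces bounded in $(\al,n)$ and dealing with the residual boxes meeting $\partial R_i$ by a finite downward iteration --- most plausibly invoking the $AC_H$-estimate simultaneously at several scales rather than a single one. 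Granting this, assembling the two estimates and shrinking $\de$ yields $\sum_i|f(\mathbf{b}_i)-f(\mathbf{a}_i)|^n<\e$, i.e.\ $f\in\al\po AC$.
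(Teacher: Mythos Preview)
This theorem is not proved in the present paper; it is quoted as \cite[Theorem~3.2]{RXB} and used only as a black box (in Claim~4 of the proof of Theorem~\ref{thmcontainments}). There is therefore no proof here to compare your proposal against.

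On its own merits: the easy inclusion is correct. For the reverse inclusion you give an outline with a self-declared gap, and the gap is genuine. You rightly observe that the $AC_H$ estimate does not apply to an arbitrary disjoint family of bounded-eccentricity boxes of small total measure --- only to families whose fixed enlargements remain disjoint --- and that your residual boxes $G_{i,r}$ need not have this property. Your suggested remedy, a ``finite downward iteration'' invoking $AC_H$ at several scales, is not carried out: you do not specify the decomposition, verify disjointness of the enlarged families at each scale, or bound the number of scales and pieces in terms of $\al$ and $n$ alone. As written, the hard half is a plan rather than a proof.

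One ingredient you do not exploit is Bongiorno's identification $AC_H=\al\po AC_H$ for every $\al\in(0,1)$ \cite{Bongiorno2009} (recalled in this paper just before Remark~\ref{def1ACH}): the $AC_H$ hypothesis already controls $\sum_i|f({}^\la[\mathbf{a}_i,\mathbf{b}_i])|^n$ for disjoint $\al$-regular families of small total measure, for any fixed $\la<1$. This reduces the problem to bridging each corner $\mathbf{a}_i$ (resp.\ $\mathbf{b}_i$) to the corresponding corner of the $\la$-shrunk interval --- a displacement along the diagonal of a small homothetic copy of $[\mathbf{a}_i,\mathbf{b}_i]$ sitting in its corner --- which is a more natural place for the $1\po AC$ hypothesis and a subdivision argument to enter than the oscillation-over-residual-boxes formulation you set up.
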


We finish the paper by showing where the class $1\po AC_{\rm WDN}$ which  consits of all functions in $1\po AC$ which are in the Sobolev space $W^{1,n}_{loc}$,  are differentiable a.e. and satisfy the Luzin (N) property,  fits in the hierarchy of previously studied classes. Namely we prove that (Theorem~\ref{thmcontainments}):
\begin{equation}\label{introcont}
 \begin{split}
 &\mathcal{Q}\po\ACq \subsetneq \al\po\ACq= 1\po\ACq_{\rm WDN}\cap \ACq_H\subsetneq 1\po\ACq_{\rm WDN}\\& \subsetneq 1\po\ACq_{\rm WDN}\cup \ACq_H \subsetneq \linspan( 1\po\ACq_{\rm WDN}\cup \ACq_H)\subseteq1\po\ACq_{\rm HWDN},
\end{split}
\end{equation}
where $1\po\ACq_{\rm HWDN}$ denotes the set of functions in $1\po\ACq_H$ (see Defnition~\ref{1AClambda} and Remark~\ref{def1ACH}) which are in the Sobolev space $W^{1,2}_{loc}$,  are differentiable a.e. and satisfy the Luzin (N) property. We pose a few related open questions in Remark~\ref{open}.

 On the other hand we observe that a  small adjustment of the function constructed by  Cs{\"o}rnyei in \cite[Theorem~2]{Csornyei2000}  (see  \eqref{bacnotone} below) shows that
\begin{equation}\label{introbacnotone}
1\po\ACq_{\rm WDN}\setminus \mathcal{B}\po\ACq \ne\emptyset,  \ \ \ \text{ and }\ \ \ \mathcal{B}\po\ACq \setminus 1\po\ACq_{\rm WDN}\ne\emptyset.
\end{equation}

\section{Preliminaries}
\label{prelres}

 Let $C_0(\R^n,\R^l)$ denote the set of all continuous functions $f:\R^n\rightarrow\R^l$ with compact support. For $f\in C_0(\R^n,\R^l)$, and a measurable set $A\subset\R^n$, let $\osc(f,A)$ denote the oscillation of $f$ on $A$, i.e.
\[
\osc(f,A) = \diam f(A).
\]
Let $K_0 \subset \R^n$ be a fixed symmetric closed convex set with non-empty interior, and let $\mathcal{K}$ denote the set of all balls of $\R^n$ in the norm defined by set $K_0$, i.e., 
$$
\mathcal{K}=\{a+rK_0:a\in\R^n, r>0\}.
$$
\begin{defn}\label{defKAC} (Cs{\"o}rnyei \cite{Csornyei2000})
We  say that a function $f\in C_0(\R^n,\R^l)$ is \textit{absolutely continuous with respect to $\mathcal{K}$} (denoted $f\in\mathcal{K}\po\ACq$) if for every $\e>0$, there exists $\de>0$ such that for every finite collection of disjoint sets $\{K_i\}_{i=1}^k \subset \mathcal{K}$,
$$
\sum\limits_{i=1}^{k} \LL^n(K_i)<\de \Rightarrow \sum\limits_{i=1}^{k} \osc^n(f,K_i)<\e.
$$
\end{defn}

 Mal\'y \cite{Maly1999} considered functions absolutely continuous with respect to the family  $\mathcal{B}$ of Euclidean balls in $\R^n$ and showed that all functions in $\mathcal{B}\po\ACq$ are differentiable a.e. and satisfy the change of variables formula, similarly as functions in $\mathcal{Q}\po\ACq$, where $\mathcal{Q}$ denotes the  family  of cubes, i.e. balls in the $\ell^n_{\infty}$-norm.
Cs{\"o}rnyei \cite{Csornyei2000} and Hencl and Mal\'y \cite{HenclandMaly2003} showed that the classes $\mathcal{B}\po\ACq$ and $\mathcal{Q}\po\ACq $ are incomparable.

In 2002, Hencl \cite{Hencl2002} introduced the following shape-independent class of absolutely continuous functions  which contains both classes $ \mathcal{Q}\po\ACq$ and $\mathcal{B}\po\ACq$.

\begin{defn} \label{qac} (Hencl \cite{Hencl2002})
We say that a function $f:\Om\to\R^l$ ($\Om\subset\R^n$ open)  is  in $\ACH$ (briefly $AC_H$) if there exists $\la\in(0,1)$  (equivalently, for all $\la\in(0,1)$) so that for all $\e>0$, there exists $\de>0$ so that for any finite collection of disjoint closed balls $\{B(\mathbf{x}_i,r_i) \subset\Om\}_{i=1}^k $  $$\sum\limits_{i=1}^k \LL^n (B(\mathbf{x}_i,r_i))<\de\Rightarrow\sum\limits_{i=1}^k \osc^n(f,B(\mathbf{x}_i,\la r_i))<\e.$$
\end{defn}

Hencl \cite{Hencl2002} proved that $AC_H\subset W^{1,n}_{loc}$ and that  all functions in $AC_H$ are  differentiable a.e. and satisfy the Luzin (N) property and  the change of variables formula.

\section{Classes $0\po\ACq$ and strong-$0\po AC$}
\label{zeroacprop}

The main result of this section is the following. 

\begin{thm} \label{0aceqcons}
Let $n\ge 2$ and  $f:\Om\to\R^l$ be a function from an open connected set $\Om\subset\R^n$. Then:
\begin{enumerate}
\item[(a)] $f\in strong\po 0\po AC$  if and only if $f$ is constant on $\Om$,
\item[(b)] $f\in 0\po AC$  if and only if $f$ is Lipschitz.
\end{enumerate}
\end{thm}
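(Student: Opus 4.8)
The plan is to prove both directions by the same mechanism, using the fact that in dimension $n\ge 2$ one can tile a long thin corridor by many disjoint intervals of small total measure, so the conclusion of the absolute-continuity implication forces control on oscillations along paths.

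First I would establish the easy directions: a constant function trivially satisfies \eqref{strong0AC}, and a Lipschitz function $f$ with constant $L$ satisfies \eqref{0AC}, since for a $1$-regular–free but arbitrary interval $[\mathbf a_i,\mathbf b_i]$ one has $|f(\mathbf a_i)-f(\mathbf b_i)|\le L|\mathbf a_i-\mathbf b_i|\le L\sqrt n\,(\max_\nu|a_{i,\nu}-b_{i,\nu}|)$, whence $\sum_i|f(\mathbf a_i)-f(\mathbf b_i)|^n\le (L\sqrt n)^n\sum_i(\max_\nu|a_{i,\nu}-b_{i,\nu}|)^n<(L\sqrt n)^n\delta$; choosing $\delta$ appropriately gives \eqref{0AC}.

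For the forward direction of (a), fix two points $\mathbf p,\mathbf q\in\Om$; by connectedness and openness it suffices to treat $\mathbf p,\mathbf q$ joined by a segment parallel to a coordinate axis lying in $\Om$, say along the $x_1$-axis, of length $\ell$. The key construction: partition this segment into $k$ subsegments of length $\ell/k$, and fatten each into a box of dimensions $(\ell/k)\times\eta\times\cdots\times\eta$ for a tiny $\eta$ — these are $k$ disjoint intervals whose Lebesgue measures sum to $\ell\cdot\eta^{n-1}$, which is $<\delta$ once $\eta$ is small, \emph{independently of $k$} because $n\ge 2$. Taking $\mathbf a_i,\mathbf b_i$ to be the two endpoints of the $i$-th box on the central axis, the hypothesis \eqref{strong0AC} forces $\sum_{i=1}^k|f(\mathbf a_i)-f(\mathbf b_i)|^n<\e$. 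Now if $f$ were not constant along this segment, I would pick $k$ large enough that the segment's partition points detect a fixed oscillation $c>0$; more carefully, by the triangle inequality $\sum_{i=1}^k|f(\mathbf a_i)-f(\mathbf b_i)|\ge|f(\mathbf p)-f(\mathbf q)|=:c$, and combining with power-mean (Hölder) $\sum_i|f(\mathbf a_i)-f(\mathbf b_i)|^n\ge k^{1-n}\big(\sum_i|f(\mathbf a_i)-f(\mathbf b_i)|\big)^n\ge k^{1-n}c^n$ — but that goes the wrong way. Instead I would run the argument through \emph{refinement}: replace the single chain by $m$ parallel copies (translated in the $x_2$-direction by multiples of $2\eta$), all still disjoint, with total measure $m\ell\eta^{n-1}<\delta$ for $\eta$ small relative to $m$; this multiplies the left side by $m$, giving $m\cdot k^{1-n}c^n<\e$ — still not immediately a contradiction. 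The clean fix, which I expect to be the real engine, is to choose the chain of intervals \emph{non-uniformly}: make one interval carry almost all the oscillation. Precisely, if $f$ is not constant, pick $\mathbf p,\mathbf q$ with $|f(\mathbf p)-f(\mathbf q)|=c>0$ on an axis-parallel segment; then for any $\delta$ take a single interval $[\mathbf a_1,\mathbf b_1]$ which is a very thin box around that segment with $\LL^n<\delta$ and $\mathbf a_1=\mathbf p$, $\mathbf b_1=\mathbf q$ — a \emph{single} term $|f(\mathbf p)-f(\mathbf q)|^n=c^n$, and for $\e<c^n$ this already contradicts \eqref{strong0AC}. This shows $f$ is constant on every axis-parallel segment in $\Om$, hence locally constant, hence constant by connectedness. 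The main obstacle is purely bookkeeping: verifying that the thin box around an axis-parallel segment is a genuine nondegenerate interval contained in $\Om$ (needs the segment to be compactly inside the open set, handled by covering a path by finitely many segments and shrinking $\eta$).

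For the forward direction of (b), the difference is that the antecedent of \eqref{0AC} uses $(\max_\nu|a_{i,\nu}-b_{i,\nu}|)^n$ rather than $\LL^n$, so a thin box of axis-length $\ell$ has antecedent value $\ell^n$, not $\to 0$; thus a single long box is \emph{not} admissible, and this is exactly why $0\po AC$ gives Lipschitz rather than constant. Here I would instead chop the axis-parallel segment of length $\ell$ into $k$ equal pieces, fatten each into a \emph{cube} of side $\ell/k$ — these $k$ disjoint cubes have antecedent sum $k\cdot(\ell/k)^n=\ell^n k^{1-n}\to0$ as $k\to\infty$ (using $n\ge2$). So for any $\delta$, choosing $k$ large makes them admissible, and \eqref{0AC} gives $\sum_{i=1}^k|f(\mathbf a_i)-f(\mathbf b_i)|^n<\e$ where $\mathbf a_i,\mathbf b_i$ are consecutive partition points, with $|\mathbf a_i-\mathbf b_i|=\ell/k$. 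This does not directly bound $|f(\mathbf a_i)-f(\mathbf b_i)|$, but it bounds it in aggregate; combined with running the same estimate with $\e$ fixed but $k$ free, I get that for each $i$, $|f(\mathbf a_i)-f(\mathbf b_i)|^n<\e$, i.e. $|f(\mathbf a_i)-f(\mathbf b_i)|<\e^{1/n}$ whenever $|\mathbf a_i-\mathbf b_i|=\ell/k$; applying this with the roles localized (place a single admissible chain whose first interval is a cube of side $s$ around $[\mathbf x,\mathbf y]$ with $\mathbf y-\mathbf x=(s,0,\dots,0)$ and pad the rest far away or make them tiny), I obtain: there is $\delta$ so that $|f(\mathbf x)-f(\mathbf y)|<\e^{1/n}$ for all axis-parallel pairs with $|\mathbf x-\mathbf y|<\delta^{1/n}$. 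Chaining along a general segment and then along a coordinate-axis polygonal path gives a uniform local Lipschitz-type modulus, and a standard connectedness/compactness argument upgrades "$\e$-$\delta$ control for every $\e$" into an honest Lipschitz constant: fixing $\e=1$, say, yields $|f(\mathbf x)-f(\mathbf y)|\le C|\mathbf x-\mathbf y|$ for nearby points, and then globally on compact subsets of $\Om$. I expect the main obstacle in (b) to be converting the aggregate bound $\sum_i|f(\mathbf a_i)-f(\mathbf b_i)|^n<\e$ into a per-step bound with a constant independent of the number of steps; the trick is that admissibility of a chain is unaffected by how many of the cubes we actually "use" for oscillation (we may place extra cubes with $\mathbf a_i=\mathbf b_i$, contributing $0$), so we may test a \emph{single} cube at a time, reducing everything to the one-interval statement "$\LL^n$ of a cube $<\delta$ implies $\osc^n<\e$ across opposite corners along an axis," which is precisely a Lipschitz estimate after rescaling.
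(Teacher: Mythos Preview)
Your argument for (a) is essentially the paper's: a single thin box around an axis-parallel segment, combined with continuity, forces $f(\mathbf p)=f(\mathbf q)$. One correction: you cannot take $\mathbf a_1=\mathbf p$ and $\mathbf b_1=\mathbf q$ literally, since $\mathbf p$ and $\mathbf q$ agree in $n-1$ coordinates and so determine a \emph{degenerate} interval. You must perturb one corner, say $\mathbf b_1=\mathbf q+\eta\sum_{\nu\ne 1}\mathbf e_\nu$, and then invoke continuity of $f$ (which you have, from strong-$0\po AC$) to pass from $|f(\mathbf p)-f(\mathbf b_1)|<\e^{1/n}$ to $|f(\mathbf p)-f(\mathbf q)|$ small. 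This is exactly what the paper does with its auxiliary point $\mathbf x^{(j+1)}(t)$, so once patched your (a) matches the paper.

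Part (b), however, has a real gap. Your endgame --- ``test a single cube at a time, reducing everything to the one-interval statement `$\LL^n$ of a cube $<\delta$ implies $\osc^n<\e$', which is precisely a Lipschitz estimate after rescaling'' --- is wrong. That one-interval statement is exactly \emph{uniform continuity}: $\|\mathbf a-\mathbf b\|_\infty<\delta^{1/n}\Rightarrow|f(\mathbf a)-f(\mathbf b)|<\e^{1/n}$. No amount of ``rescaling'' or ``standard connectedness/compactness'' upgrades uniform continuity to Lipschitz; plenty of uniformly continuous non-Lipschitz functions on $\R^n$ satisfy such an $\e$--$\delta$ bound. Your chain-of-cubes idea also fails to deliver Lipschitz: the diagonal corners do not telescope along the axis, and the H\"older bound $\sum_i|f(\mathbf a_i)-f(\mathbf b_i)|\le k^{1-1/n}\e^{1/n}$ blows up with $k$.

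The missing mechanism is the \emph{parallel-copies} construction you touched on in (a) and then discarded. The paper runs (b) by contradiction: assuming $f$ is continuous with compact support but not Lipschitz, one finds (after reducing to an axis-parallel pair $\mathbf z,\mathbf z'$ differing in one coordinate) arbitrarily large Lipschitz quotients $\gamma/\ell\ge m$, where $\gamma=|f(\mathbf z)-f(\mathbf z')|$ and $\ell=|\mathbf z-\mathbf z'|$. One then lays down $\mu\asymp\gamma^{-n}$ \emph{disjoint translates} of the thin interval $[\mathbf z,\mathbf z']$, staggered slightly in the transverse directions. The antecedent sum is $\mu\,\ell^n\le\gamma^{-n}(\gamma/m)^n=m^{-n}<\delta$ for $m$ large, while uniform continuity keeps each translate's contribution within a fixed fraction of $\gamma$, so the conclusion sum is $\gtrsim\mu\gamma^n\asymp 1$, contradicting $0\po AC$ with $\e=3^{-(n+1)}$. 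The point is that the antecedent of \eqref{0AC} scales like $\ell^n$ per interval, so to defeat a large quotient $\gamma/\ell$ you must multiply the number of intervals by $\gamma^{-n}$, not subdivide a single segment. Your proposal never exploits this, which is why it stalls at uniform continuity.
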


\begin{proof}[Proof of {\rm (a)}]
We show that every strong-$ 0\po\AC $  function  $f$ is constant. It is clear that if  $f\in$ strong-$ 0\po\AC $ then  $f$ is continuous.  

Let $\mathbf{a}\in\Om$. We claim that the set $\Om_\mathbf{a}\DEF\{\mathbf{b}\in \Om: f(\mathbf{b})=f(\mathbf{a})\}$ is equal to $\Om$. Since $f$ is  continuous, $\Om_\mathbf{a}$ is closed in $\Om$. We will show that $\Om_\mathbf{a}$ is open.

Fix $\e>0$. Let $r>0$ be such that $B(\mathbf{a},r)\subset \Om$, and let $\mathbf{b}\in B(\mathbf{a},r)$.
For each  $j=0,1,\dots,n-1,$ let
\begin{eqnarray*}
(\mathbf{c}^{(j)})_i = 
\begin{cases}
b_i, \text { if } i\leq j, \\
 a_i,  \text { if }  i>j.
\end{cases}
\end{eqnarray*}
Then $\mathbf{c}^{(0)}=\mathbf{a}$, $\mathbf{c}^{(n-1)}=\mathbf{b}$, and  for each $j\le n-1$,   $\mathbf{c}^{(j)}\in  B(\mathbf{a},r) $, and $\mathbf{c}^{(j)}$ and $\mathbf{c}^{(j+1)}$ differ only on the $(j+1)$-th coordinate.
Since $f$ is  continuous,  there exists $\de_1>0$, such that for all $j\le n-1$ and  all $\mathbf{x}$ with $|\mathbf{x}-\mathbf{c}^{(j)}|<\de_1$ we have
\begin{equation}\lb{fcont}
 |f(\mathbf{c}^{(j)})-f(\mathbf{x})|<\frac{\e}{2n}.
\end{equation}

For any $t>0$, let
\begin{align*}
(\mathbf{x}^{(j+1)}(t))_i = 
\begin{cases}
(\mathbf{c}^{(j+1)})_{j+1}, &\text { if } i= j+1, \\
(\mathbf{c}^{(j+1)})_i +t\sgn((\mathbf{c}^{(j+1)})_{j+1}-(\mathbf{c}^{(j)})_{j+1}),  &\text { if }  i\ne j+1.
\end{cases}
\end{align*}

Then 
\begin{equation}\lb{xc}
|\mathbf{x}^{(j+1)}(t)- \mathbf{c}^{(j+1)}|=\sqrt{n-1}\ t,
\end{equation}
and 
the Lebesgue measure of the interval with endpoints $\mathbf{x}^{(j+1)}$ and $\mathbf{c}^{(j)}$ is equal to $t^{n-1} |\mathbf{a}_{j+1}-\mathbf{b}_{j+1}|$.
By \eqref{strong0AC}, there exists $\de_2>0$, such that   if  $t^{n-1} |\mathbf{a}_{j+1}-\mathbf{b}_{j+1}|\le t^{n-1}r<\de_2,$
then
\begin{equation}\lb{fcjx}
 |f(\mathbf{c}^{(j)})-f(\mathbf{x}^{(j+1)}(t))|^n<\left(\frac{\e}{2n}\right)^n.
\end{equation}
Now let $\de=\min(\de_1/\sqrt{n-1},\sqrt[n-1]{\de_2/r})$ and  $0<t<\de$. Then, by \eqref{fcont}, \eqref{xc} and \eqref{fcjx}, we have 
\begin{align*}
|f(\mathbf{c}^{(j)})-f(\mathbf{c}^{(j+1)})|&\leq |f(\mathbf{c}^{(j)})-f(\mathbf{x}^{(j+1)}(t))|+|f(\mathbf{x}^{(j+1)}(t))-f(\mathbf{c}^{(j+1)})|\\
&<\frac{\e}{2n}+\frac{\e}{2n}=\frac{\e}{n}.
\end{align*}
Thus 
\begin{align*}
|f(\mathbf{a})-f(\mathbf{b})|&\leq\sum_{j=0}^{n-1} |f(\mathbf{c}^{(j)})-f(\mathbf{c}^{(j+1)})|<n\frac{\e}{n}= \e.
\end{align*}
Since $\e$ is arbitrary, we get $f(\mathbf{b})=f(\mathbf{a})$, and thus $B(\mathbf{a},r)\subseteq \Om_\mathbf{a}$.
\end{proof}

\begin{proof}[Proof of {\rm (b)}]
Since the Euclidean norm  and the $\ell_1$-norm  $\|\cdot\|_1$ are equivalent on  $\R^n$, we can \buo replace $|\cdot|$ with $\|\cdot\|_1$  in the definition of 0-absolute continuity, which we will do to simplify some computations.

It is not difficult to see that Lipschitz functions are 0-absolutely continuous. Indeed, suppose that $f:\Om\to\R^l$ ($\Om\subset\R^n$ open) is  Lipschitz with constant  $M>0$.   Then for any $\e>0$ and for any finite collection of non-overlapping arbitrary intervals $\{[\mathbf{a}_i,\mathbf{b}_i]\subset \Om\}$  with 
$$
\sum\limits_{i=1}^k (\max_\nu|a_{i,\nu}-b_{i,\nu}|)^n <\frac{\e}{(Mn)^n},
$$
we have 
\begin{align*}
\sum\limits_{i=1}^k \|f(\mathbf{a}_i)-f(\mathbf{b}_i)\|_1^n &<\sum\limits_{i=1}^k M^n\|\mathbf{a}_i-\mathbf{b}_i\|_1^n \\
&\leq \sum\limits_{i=1}^k M^n\cdot(n\max_\nu|a_{i,\nu}-b_{i,\nu}|)^n \\
&< (Mn)^n\cdot \frac{\e}{(Mn)^n} =\e,
\end{align*}
which completes the proof.

For the other direction, we note that all functions in $0\po AC$ are continuous, and let
  $f:\Om\to\R^l$ be with compact support and  not Lipschitz. Then for every $m\in \mathbb{N}$, there exists $\mathbf{x}_m,\mathbf{y}_m$ so that 
\begin{equation}\label{notLip}
\|f(\mathbf{x}_m)-f(\mathbf{y}_m)\|_{1} \geq m \|\mathbf{x}_m-\mathbf{y}_m\|_{1}.
\end{equation}

For each  $j=0,1,\dots, n,$ let
\begin{align*}
(\mathbf{z}_m^{(j)})_i = 
\begin{cases}
(\mathbf{x}_m)_i, i\leq j, \\
(\mathbf{y}_m)_i, i>j.
\end{cases}
\end{align*}

Then $\mathbf{z}_m^{(0)}=\mathbf{y}_m$, $\mathbf{z}_m^{(m)}=\mathbf{x}_m$, and 
$$
\|\mathbf{x}_m-\mathbf{y}_m\|_{1}=\sum\limits_{j=1}^n \|\mathbf{z}_m^{(j)}-\mathbf{z}_m^{(j-1)}\|_1.
$$
If for every $j=1,\dots, n$
$$
\|f(\mathbf{z}_m^{(j)})-f(\mathbf{z}_m^{(j-1)})\|_1 < m\|\mathbf{z}_m^{(j)}-\mathbf{z}_m^{(j-1)}\|_1,
$$
then we will have 
\begin{align*}
\|f(\mathbf{x}_m)-f(\mathbf{y}_m)\|_1 &\leq \sum\limits_{j=1}^n \|f(\mathbf{z}_m^{(j)})-f(\mathbf{z}_m^{(j-1)})\|_1 \\
&<m\cdot \sum\limits_{j=1}^n \|\mathbf{z}_m^{(j)}-\mathbf{z}_m^{(j-1)}\|_1 =m\cdot \|\mathbf{x}_m-\mathbf{y}_m\|_1.
\end{align*}
which contradicts \eqref{notLip}.
Thus there exists $j_m\in{1,\dots,n}$ so that 
\begin{equation} \label{goodnotLip}
 \|f(\mathbf{z}_m^{(j)})-f(\mathbf{z}_m^{(j-1)})\|_1 \geq m\cdot  \|\mathbf{z}_m^{(j)}-\mathbf{z}_m^{(j-1)}\|_1
\end{equation}
Note that all coordinates of $\mathbf{z}_m^{(j_m)} $ and $\mathbf{z}_m^{(j_m-1)}$ coincide, with the exception of the $j_m$-th coordinate. 

Since $f$ is a continuous function with compact support, $f$ is bounded and uniformly continuous and thus \eqref{goodnotLip} implies that 
\begin{equation*}\label{fzn}
\lim_{m\rightarrow\infty}\|f(\mathbf{z}_m^{(j_m)})-f(\mathbf{z}_m^{(j_m-1)})\|_1 = 0
\end{equation*}
and
\begin{equation*}\label{zn}
\lim_{m\rightarrow\infty} \|\mathbf{z}_m^{(j_m)}-\mathbf{z}_m^{(j_m-1)}\|_1 = 0.
\end{equation*}

We claim that $f\notin 0\po AC$.
Indeed, let $\e=\frac{1}{3^{n+1}}$,  $\de>0$ and $m\in \mathbb{N}$ so that $\frac{1}{m} < \de^{\frac{1}{n}}$. 
Then, by \eqref{goodnotLip}, we have 
$$
\frac{\|\mathbf{z}_m^{(j_m)}-\mathbf{z}_m^{(j_m-1)}\|_1^n}{\|f(\mathbf{z}_m^{(j_m)})-f(\mathbf{z}_m^{(j_m-1)})\|_1^n} <\de.
$$
Denote 
$
\gamma=\|f(\mathbf{z}_m^{(j_m)})-f(\mathbf{z}_m^{(j_m-1)})\|_1,
$
and choose $\m\in\mathbb{N}$ so that 
\begin{equation}\label{m}
\frac{1}{3}\gamma^{-n}\leq \m \leq \gamma^{-n}
\end{equation}
Then 
\begin{equation}\label{mzn}
\m\cdot \|\mathbf{z}_m^{(j_m)}-\mathbf{z}_m^{(j_m-1)}\|_1^n <\de
\end{equation}
Since $f$ is uniformly continuous, there exists $0<\eta<\|\mathbf{z}_m^{(j_m)}-\mathbf{z}_m^{(j_m-1)}\|_1$ so that for all
$\mathbf{x}, \mathbf{y}$  
\begin{equation}\label{eta}
\|\mathbf{x}-\mathbf{y}\|_1<\eta \Rightarrow \|f(\mathbf{x})-f(\mathbf{y})\|_1<\frac{1}{3}\cdot\gamma
\end{equation}
For $i=1,\dots,\m$, we define 
\begin{align*}
&\mathbf{a}_i=\mathbf{z}_m^{(j_m)}+i\Big(\frac{\eta}{n(\m+1)}\Big)\cdot\Big(\sum\limits_{v\neq j_m} \mathbf{e}_\nu\Big) \\
&\mathbf{b}_i=\mathbf{z}_m^{(j_m-1)}+(i+1)\Big(\frac{\eta}{n(\m+1)}\Big)\cdot\Big(\sum\limits_{v\neq j_m} \mathbf{e}_\nu\Big).
\end{align*}
Then for every $i$ we have 
\begin{align*}
&\max_\nu|\mathbf{a}_{i,\nu}-\mathbf{b}_{i,\nu}|= \|\mathbf{z}_m^{(j_m)}-\mathbf{z}_m^{(j_m-1)}\|_1, \\
&\|\mathbf{a}_i-\mathbf{z}_m^{(j_m)}\|_1=i\Big(\frac{\eta}{n(\m+1)}\Big)\cdot(n-1)<\eta, \\
&\|\mathbf{b}_i-\mathbf{z}_m^{(j_m-1)}\|_1=(i+1)\Big(\frac{\eta}{n(\m+1)}\Big)\cdot(n-1)<\eta.
\end{align*}
Thus, by \eqref{eta} we get 
\begin{align*}
\|f(\mathbf{a}_i)-f(\mathbf{b}_i)\|_1 &\geq \|f(\mathbf{z}_m^{(j_m)})-f(\mathbf{z}_m^{(j_m-1)})\|_1 - \frac{2}{3}\gamma \\
&= \frac{1}{3} \|f(\mathbf{z}_m^{(j_m)})-f(\mathbf{z}_m^{(j_m-1)})\|_1
\end{align*}
Hence, by \eqref{m} we get 
$$
\sum\limits_{i=1}^\m \|f(\mathbf{a}_i)-f(\mathbf{b}_i)\|_1^n \geq \m\cdot\frac{1}{3^n}\gamma^n\geq \frac{1}{3}\cdot\frac{1}{3^n}\geq \frac{1}{3^{n+1}}.
$$
On the other hand, by \eqref{mzn} we have 
\begin{align*}
 \sum\limits_{i=1}^\m (\max_\nu|\mathbf{a}_{i,\nu}-\mathbf{b}_{i,\nu}|)^n =
 \sum\limits_{i=1}^\m \|\mathbf{z}_m^{(j_m)}-\mathbf{z}_m^{(j_m-1)}\|_1^n <\de
\end{align*}
which ends the proof that $f\notin 0\po AC$.
\end{proof}

\section{The Hencl type extension of the class $1\po AC$}

We give an analog of Definition~\ref{qac}, and we prove that, similarly as for other classes of absolutely continuous functions, the classes $1\po AC_\la$ do not depend on 
$\la$ when $0<\la<1$.

Following \cite{Bongiorno2009}, we will use the following notation. 
Given interval $[\mathbf{x},\mathbf{y}]$, we denote $|f([\mathbf{x},\mathbf{y}])|=|f(\mathbf{y})-f(\mathbf{x})|$, and given $0<\la<1$, we denote by $~^\la[\mathbf{x},\mathbf{y}]$ the interval with center $(\mathbf{x}+\mathbf{y})/2$ and sides of length $\la(y_\nu-x_\nu)$, $\nu=1,\dots,n$.

\begin{defn} \label{1AClambda} (cf. \cite{Bongiorno2009})
 Let  $\al\in(0,1]$ and $\la\in(0,1)$. A function $f:\Om\to\R^l$ ($\Om\subset\R^n$ open)  is said to be in $\al\po AC_\la^{(n)}(\Om,\R^l)$ (briefly $\al\po AC_\la$) if  for each $\e>0$, there exists $\de>0$ such that for any finite collection of disjoint $\al$-regular intervals 
$\{ [\mathbf{a}_i,\mathbf{b}_i]\subset\Om \}_{i=1}^k$ 
we have 
\begin{equation*}\lb{1acla}
\sum_{i=1}^k \LL^n ([\mathbf{a}_i,\mathbf{b}_i])<\de \Rightarrow\sum_{i=1}^k|f(^\la[\mathbf{a}_i,\mathbf{b}_i)])|^n<\e.
\end{equation*}
\end{defn}

Bongiorno \cite{Bongiorno2009} proved that  for all $\al<1$, the class $\al\po AC_\la$ is independent of $\la$. We prove the same result for  $1\po AC_\la$.

\begin{thm}
\label{lambdaeq}
Let $0<\la_1<\la_2<1$. Then 
$$1\po  AC_{\la_1}^{(n)}(\Om,\R^l)=1\po AC_{\la_2}^{(n)}(\Om,\R^l).$$
\end{thm}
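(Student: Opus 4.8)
The plan is to prove the two inclusions separately, with the non-trivial direction being $1\po AC_{\la_1}\subseteq 1\po AC_{\la_2}$ (larger shrinking parameter), i.e. showing that control of oscillations on the smaller rescaled intervals $^{\la_1}[\mathbf{a},\mathbf{b}]$ forces control of the differences $|f(^{\la_2}[\mathbf{a},\mathbf{b}])|$ on the larger ones. The opposite inclusion $1\po AC_{\la_2}\subseteq 1\po AC_{\la_1}$ should be comparatively easy: given a collection of disjoint $1$-regular intervals $[\mathbf{a}_i,\mathbf{b}_i]$, one dilates each one about its center by the factor $\la_1/\la_2<1$ to obtain intervals $[\mathbf{a}_i',\mathbf{b}_i']$ which are still $1$-regular (dilation about the center preserves regularity since it scales all side lengths equally), still pairwise disjoint, of total measure at most the original total measure, and with $^{\la_2}[\mathbf{a}_i',\mathbf{b}_i'] = \,^{\la_1}[\mathbf{a}_i,\mathbf{b}_i]$ as sets, so $|f(^{\la_1}[\mathbf{a}_i,\mathbf{b}_i])| = |f(^{\la_2}[\mathbf{a}_i',\mathbf{b}_i'])|$; thus the $1\po AC_{\la_2}$ condition applied to the primed collection yields the $1\po AC_{\la_1}$ condition for the original one, with the same $\de$.

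For the main inclusion, I would follow the standard Hencl/Bongiorno chaining idea. Fix $\la_1<\la_2$ and assume $f\in 1\po AC_{\la_1}$. Given $\e>0$, pick $\de$ from the $1\po AC_{\la_1}$ hypothesis corresponding to $\e$ (up to a dimensional constant to be absorbed later). Now take disjoint $1$-regular intervals $[\mathbf{a}_i,\mathbf{b}_i]$ with $\sum_i \leb^n([\mathbf{a}_i,\mathbf{b}_i])<\de'$ for a suitably small $\de'\le\de$. Since the intervals are $1$-regular, they are in fact cubes (the ratio $\leb^n/(\max_\nu|a_\nu-b_\nu|)^n\ge 1$ forces equality, so all side lengths agree); call the side length $2\rho_i$, so $^{\la}[\mathbf{a}_i,\mathbf{b}_i]$ is the concentric cube of side $2\la\rho_i$. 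The key geometric step is: inside the $\la_1$-cube about the center — wait, that is too small; rather, one covers the $\la_2$-cube $^{\la_2}[\mathbf{a}_i,\mathbf{b}_i]$ by a bounded number $N=N(n,\la_1,\la_2)$ of smaller cubes each of side $2\la_1\rho_i$, arranged so that the diameter of $f$ on $^{\la_2}[\mathbf{a}_i,\mathbf{b}_i]$ is bounded by a sum of $|f(^{\la_1}[\cdot,\cdot])|$-type increments along a chain of such cubes whose consecutive members overlap. Concretely, connect the two endpoints $^{\la_2}\mathbf{a}_i$ and $^{\la_2}\mathbf{b}_i$ by a chain of $N$ intermediate points lying on the main diagonal direction inside $^{\la_2}[\mathbf{a}_i,\mathbf{b}_i]$ such that each consecutive pair is the pair of endpoints of a sub-cube of side $2\la_1\rho_i$ contained in $[\mathbf{a}_i,\mathbf{b}_i]$; the number $N\approx\lceil\la_2/\la_1\rceil$ depends only on $\la_1,\la_2$ (and this is where it matters that we only need a single diagonal chain, not a full covering, because the quantity being estimated is $|f(^{\la_2}[\mathbf{a}_i,\mathbf{b}_i])| = |f(^{\la_2}\mathbf{b}_i)-f(^{\la_2}\mathbf{a}_i)|$). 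Then by the triangle inequality and convexity of $t\mapsto t^n$,
\[
|f(^{\la_2}[\mathbf{a}_i,\mathbf{b}_i])|^n \le \Big(\sum_{j=1}^{N}|f(J_{i,j})|\Big)^n \le N^{n-1}\sum_{j=1}^{N}|f(J_{i,j})|^n,
\]
where each $J_{i,j}$ is a $1$-regular interval with $^{\la_1}J_{i,j} = $ one of these sub-cubes; but one must be careful that the family $\{J_{i,j}\}_{i,j}$ can be split into $N$ subfamilies, each consisting of pairwise disjoint $1$-regular intervals of total measure at most $\sum_i\leb^n([\mathbf{a}_i,\mathbf{b}_i])<\de'$.

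The disjointness is the point requiring the most care, and I expect it to be the main obstacle. Within a single parent cube the $N$ sub-cubes $^{\la_1}J_{i,1},\dots,^{\la_1}J_{i,N}$ along the diagonal chain genuinely overlap (consecutive ones must, to chain the function values), so one cannot feed them all at once into the $1\po AC_{\la_1}$ condition. The fix is to colour them: assign to each chain position $j\in\{1,\dots,N\}$ its own colour, forming subfamilies $\mathcal{F}_j = \{J_{i,j} : i\}$; within $\mathcal{F}_j$ the intervals are disjoint because the parent cubes $[\mathbf{a}_i,\mathbf{b}_i]$ are disjoint and $J_{i,j}\subseteq[\mathbf{a}_i,\mathbf{b}_i]$ — here we need $^{\la_1}J_{i,j}$ to sit inside $[\mathbf{a}_i,\mathbf{b}_i]$, which constrains how far out along the diagonal the chain may reach and forces $\la_1$ (not $\la_2$) to govern the sub-cube size, consistent with the construction above. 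Applying the $1\po AC_{\la_1}$ hypothesis to each of the $N$ subfamilies $\mathcal{F}_j$ separately (each has total measure $<\de'\le\de$) gives $\sum_i |f(J_{i,j})|^n < \e_0$; summing over $j$ and combining with the convexity estimate yields $\sum_i |f(^{\la_2}[\mathbf{a}_i,\mathbf{b}_i])|^n \le N^{n-1}\cdot N\cdot\e_0 = N^n\e_0$. Choosing $\e_0 = \e/N^n$ at the outset (which fixes $\de'=\de$ through the $1\po AC_{\la_1}$ condition for $\e_0$) completes the proof. The remaining routine points — making the diagonal chain explicit in coordinates, checking $\leb^n(^{\la_1}J_{i,j})$ relates correctly to $\leb^n([\mathbf{a}_i,\mathbf{b}_i])$ so that the total-measure bound survives, and verifying $1$-regularity of each $J_{i,j}$ — I would relegate to a short computation.
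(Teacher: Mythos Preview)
Your argument is correct and follows the same diagonal-chaining idea as the paper, but the paper streamlines the hard inclusion with a pigeonhole step that removes the need for your coloring into $N$ subfamilies. After writing $|f(^{\la_2}[\mathbf{a}_i,\mathbf{b}_i])|\le\sum_{j=0}^{p-1}|f(^{\la_1}J_{i,j})|$ along the diagonal chain, the paper simply picks, for each $i$, a single index $j_0=j_0(i)$ realizing at least the average, so that $|f(^{\la_2}[\mathbf{a}_i,\mathbf{b}_i])|\le p\,|f(^{\la_1}J_{i,j_0})|$; the resulting single family $\{J_{i,j_0(i)}\}_i$ is pairwise disjoint (each $J_{i,j_0(i)}\subseteq[\mathbf{a}_i,\mathbf{b}_i]$, guaranteed by the choice $p>\frac{2\la_2}{(1-\la_2)\la_1}$), and one application of the $1\po AC_{\la_1}$ hypothesis finishes the job. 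Your version works too, at the cost of $N$ applications and the extra H\"older step; either way the containment constraint $N\gtrsim \la_2/((1-\la_2)\la_1)$ is the same.

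One small slip to fix: in your displayed inequality the terms should be $|f(^{\la_1}J_{i,j})|$, not $|f(J_{i,j})|$, since the chain points you telescope over are the endpoints of $^{\la_1}J_{i,j}$, and it is $\sum_j|f(^{\la_1}J_{i,j})|^n$ that the $1\po AC_{\la_1}$ hypothesis controls. Your easy direction (shrinking by $\la_1/\la_2$) is exactly right and in fact more explicit than the paper, which simply declares that inclusion ``easy to see.''
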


\begin{proof}
It is easy to see that $1\po\ACq_{\la_2} \subseteq 1\po\ACq_{\la_1}$. For the other direction, suppose $f \in 1\po\ACq_{\la_1}$.
 Fix $p\in \mathbb{N}$ such that $p>\frac{2\la_2}{(1-\la_2)\la_1}$. For any $\e>0$ there exists $\de>0$ such that for each finite family of non-overlapping 1-regular intervals $\{[\overline{\mathbf{a}_i},\overline{\mathbf{b}_i]}\subset \Om\}$, we have 
\begin{equation}
\label{la1cond}
\sum_{i}^{} \LL^n([\overline{\mathbf{a}_i},\overline{\mathbf{b}_i]})<\de \Rightarrow \sum_{i}^{}| f(^{\la_1}[\overline{\mathbf{a}_i},\overline{\mathbf{b}_i]})|^n<\frac{\e}{p^n}.
\end{equation}

Let $\{[\mathbf{a}_i,\mathbf{b}_i]\}$ be a finite family of  non-overlapping 1-regular intervals in $\Om$ with $\sum_{i}^{} \LL^n([\mathbf{a}_i,\mathbf{b}_i])<\de$. Let $\mathbf{c}_i$, $\mathbf{d}_i$ be such that $[\mathbf{c}_i,\mathbf{d}_i]= ~^{\la_2}[\mathbf{a}_i,\mathbf{b}_i]$. Then 
\begin{align*}
|f(^{\la_2}[\mathbf{a}_i,\mathbf{b}_i])| 
&= |f([\mathbf{c}_i,\mathbf{d}_i])| \\
&\leq \sum\limits_{j=0}^{p-1} \left|f\Big(\big[\mathbf{c}_i+\frac{(\mathbf{d}_i-\mathbf{c}_i)j}{p},\mathbf{c}_i+\frac{(\mathbf{d}_i-\mathbf{c}_i)(j+1)}{p}\big]\Big)\right|
\end{align*}
Hence there exists $j_0 \in \{0,\dots,p-1\}$ such that for 
$$
[\overline{\mathbf{a}_i},\overline{\mathbf{b}_i}]=^{~^{\frac{1}{\la_1}}}\left[\mathbf{c}_i+\frac{(\mathbf{d}_i-\mathbf{c}_i)j_0}{p},\mathbf{c}_i+\frac{(\mathbf{d}_i-\mathbf{c}_i)(j_0+1)}{p}\right],
$$
we have 
\begin{equation}
\label{la2leqla1}
|f(^{\la_2}[\mathbf{a}_i,\mathbf{b}_i])| \leq p|f(^{\la_1}[\overline{\mathbf{a}_i},\overline{\mathbf{b}_i}])|.
\end{equation}
Since $p>\frac{2\la_2}{(1-\la_2)\la_1}$, we obtain $[\overline{\mathbf{a}_i},\overline{\mathbf{b}_i}] \subseteq [\mathbf{a}_i,\mathbf{b}_i]$ and hence the intervals $[\overline{\mathbf{a}_i},\overline{\mathbf{b}_i}]$ are pairwise disjoint. Since 
$
\sum\limits_{i}^{} \LL^n([\overline{\mathbf{a}_i},\overline{\mathbf{b}_i}]) \leq \sum\limits_{i}^{} \LL^n([\mathbf{a}_i,\mathbf{b}_i])<\de,
$
by \eqref{la2leqla1} and \eqref{la1cond}, we get
$$
\sum\limits_{i}^{} |f(^{\la_2}[\mathbf{a}_i,\mathbf{b}_i])|^n \leq p^n\sum\limits_{i}^{} |f(^{\la_1}[\overline{\mathbf{a}_i},\overline{\mathbf{b}_i}])|^n < p^n\frac{\e}{p^n} = \e.
$$
\end{proof}

\begin{rem}\lb{def1ACH}
By Theorem~\ref{lambdaeq}, in analogy with Definition~\ref{qac} and \cite{Bongiorno2009} we will use the notation $1\po\ACq_H$ and $1\po\ACH$ instead of $1\po\ACq_{\la}$ and $1\po\ACq_{\la}^{(n)}(\Om,\R^l)$.
\end{rem}

\begin{cor}
$$\ACH \subseteq 1\po\ACq_{H}^{(n)}(\Om,\R^l).$$ 
\end{cor}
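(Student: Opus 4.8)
The plan is to reduce the inclusion to one elementary comparison between cubes and Euclidean balls, exploiting that both classes involved are insensitive to their dilation parameter. By Theorem~\ref{lambdaeq} it suffices to check $f\in 1\po AC_\la$ for a single, conveniently small $\la\in(0,1/\sqrt n)$; put $\m:=\la\sqrt n\in(0,1)$. On the source side we use that $f\in\ACH$ may be tested with dilation parameter $\m$, since the defining property in Definition~\ref{qac} holds for every value of the dilation parameter in $(0,1)$.

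The geometric input is threefold. First, a $1$-regular interval is exactly a (nondegenerate) cube: one always has $\frac{\LL^n([\mathbf{a},\mathbf{b}])}{(\max_\nu|a_\nu-b_\nu|)^n}\le 1$, with equality precisely when all edge lengths coincide, so $1$-regularity forces a common side length $s$ (degenerate intervals, $\mathbf{a}=\mathbf{b}$, contribute nothing and may be discarded). Second, for such a cube $Q=[\mathbf{a},\mathbf{b}]$ with center $\mathbf{c}$ we have $\LL^n(Q)=s^n$, and the closed inscribed ball satisfies $B(\mathbf{c},s/2)\subseteq Q$. Third, ${}^\la[\mathbf{a},\mathbf{b}]$ is the concentric cube of side $\la s$, whose circumradius equals $\tfrac{\la s\sqrt n}{2}=\m\cdot\tfrac s2$, so that ${}^\la[\mathbf{a},\mathbf{b}]\subseteq B\big(\mathbf{c},\m\tfrac s2\big)$.

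With these facts the argument is routine. Given $\e>0$, let $\de>0$ be the constant provided by Definition~\ref{qac} for $f$, the dilation $\m$, and tolerance $\e$. Take any finite family of disjoint $1$-regular intervals $\{[\mathbf{a}_i,\mathbf{b}_i]\subset\Om\}_{i=1}^k$ with $\sum_i\LL^n([\mathbf{a}_i,\mathbf{b}_i])<\de$; write $\mathbf{c}_i$, $s_i$ for their centers and side lengths and put $r_i:=s_i/2$. The closed inscribed balls $B(\mathbf{c}_i,r_i)$ lie inside the respective (pairwise disjoint) cubes, hence form a disjoint family of closed balls contained in $\Om$, and $\LL^n(B(\mathbf{c}_i,r_i))\le\LL^n([\mathbf{a}_i,\mathbf{b}_i])$, so $\sum_i\LL^n(B(\mathbf{c}_i,r_i))<\de$. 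Since the two points defining $|f({}^\la[\mathbf{a}_i,\mathbf{b}_i])|$ lie in ${}^\la[\mathbf{a}_i,\mathbf{b}_i]\subseteq B(\mathbf{c}_i,\m r_i)$, we get $|f({}^\la[\mathbf{a}_i,\mathbf{b}_i])|\le\osc(f,{}^\la[\mathbf{a}_i,\mathbf{b}_i])\le\osc(f,B(\mathbf{c}_i,\m r_i))$, and hence
\[
\sum_{i=1}^{k}|f({}^\la[\mathbf{a}_i,\mathbf{b}_i])|^n\le\sum_{i=1}^{k}\osc^n\big(f,B(\mathbf{c}_i,\m r_i)\big)<\e .
\]
Thus $f\in 1\po AC_\la$, and by Theorem~\ref{lambdaeq}, $f\in 1\po\ACH$.

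I do not expect a genuine obstacle here; the one point to watch is the factor $\sqrt n$ relating the side of ${}^\la Q$ to its circumradius, which forces the choice $\la<1/\sqrt n$ and the use of the dilation $\la\sqrt n$ (rather than $\la$) in the $\ACH$ condition --- both harmless thanks to the dilation-independence of $\ACH$ and, via Theorem~\ref{lambdaeq}, of $1\po\ACH$. If disjointness of a family of intervals is understood only as non-overlapping, the closed inscribed balls should first be shrunk by a factor arbitrarily close to $1$ so that they become genuinely disjoint; this affects none of the estimates above.
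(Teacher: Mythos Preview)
Your proof is correct. The approach, however, differs from the paper's. The paper argues in two lines: since every $1$-regular interval is $\al$-regular for any $\al\le 1$, one has the trivial inclusion $\al\po AC_\la\subseteq 1\po AC_\la$ for each $\al<1$; combining this with Bongiorno's identification $\al\po AC_H=AC_H$ (cited from \cite{Bongiorno2009}) yields $AC_H=\al\po AC_H\subseteq 1\po AC_H$ immediately. Your route is instead a direct, self-contained geometric verification: identify $1$-regular intervals as cubes, inscribe the Euclidean ball of radius $s/2$, and trap the $\la$-shrunk cube inside the $\la\sqrt n$-shrunk ball so as to compare $|f({}^\la[\mathbf{a},\mathbf{b}])|$ with an oscillation term governed by the $AC_H$ condition. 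The paper's argument is shorter but imports Bongiorno's theorem as a black box; yours avoids that dependence and makes the cube--ball comparison explicit, at the cost of a little more bookkeeping with the dilation parameters (handled correctly via Theorem~\ref{lambdaeq} and the ``for all $\la$'' clause in Definition~\ref{qac}).
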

\begin{proof}
Note that it follows from the definition of $\al$-regularity of intervals that if $\al<\be$ and $f\in  \al\po AC_\la$ then $f\in  \be\po AC_\la$. In particular, for all $\al<1$,
$ \al\po AC_\la\subseteq1\po AC_\la$. Bongiorno \cite{Bongiorno2009} proved that  for all $\al<1$,  $\al\po AC_H=AC_H$. Thus $ AC_H\subseteq1\po AC_H$.
\end{proof}

\section{Class $1\po\ACq$}
\label{oneacprop}

To simplify notation, the  results of this section,  except Theorem \ref{diff11}, are stated for functions defined on subsets of $\R^2$ with range in $\R$. However they can be easily generalized to functions from
$\Om\subset \R^n$ to $\R^l$ for any $n\ge 2,l\in\N$.

We start from a structural result which will allow us to give examples of  functions in $1\po AC$.

\begin{thm} \label{lipone}
Let $d>0$ and let $S_d$ denote the square with vertices $(\pm{2}d,0)$, $(0,\pm{2}d)$. Let   $h:\R\lra \R$ and  $g:\R\lra \R$ be nonzero measurable functions with support contained in $[-d,d]$. We consider $\R^2$ with the basis $\{{\bf x}_1=(-1,1),{\bf x}_2=(1,1)\}$ and define $f : \R^{2} \longrightarrow \R$  with support contained in $S_d$ by 
$$f(s{\bf x}_1+t{\bf x}_2)=h(s)g(t).$$ 
(Alternatively, in the standard basis of $\R^2$, 
$f(x,y)=h(\frac{y-x}2)g(\frac{y+x}2).)$
\begin{itemize}
\item[(a)] If $h$ is bounded and $g$ is  Lipschitz, then $f$ is in $1$-$AC^{2}(\R^{2},\R)$.
\item[(b)] If $f\in 1\po\ACq_H^2(\R^2,\R) $, then $g$ is Lipschitz. 
\end{itemize}
\end{thm}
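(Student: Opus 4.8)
My plan is to treat the two parts separately, since they go in opposite directions. For part (a), I would argue directly from Definition~\ref{oneac}. Take a finite collection of disjoint $1$-regular intervals $\{[\mathbf a_i,\mathbf b_i]\}$ in $\R^2$. A $1$-regular interval is a square (its Lebesgue measure equals the square of its longest side), and I would want to express things in the skewed coordinates $(s,t)$ adapted to the basis $\{\mathbf x_1,\mathbf x_2\}$, i.e. $f(s\mathbf x_1+t\mathbf x_2)=h(s)g(t)$. The key point is that an axis-parallel square $[\mathbf a_i,\mathbf b_i]$ in the standard coordinates corresponds, in the $(s,t)$-coordinates, to a \emph{square} as well — rotated by $45^\circ$ and scaled — so its two endpoints $\mathbf a_i$ and $\mathbf b_i$ written as $s_i\mathbf x_1+t_i\mathbf x_2$, $s_i'\mathbf x_1+t_i'\mathbf x_2$ satisfy $|s_i-s_i'|=|t_i-t_i'|=:\ell_i$, and $\LL^2([\mathbf a_i,\mathbf b_i])$ is comparable to $\ell_i^2$ (with an absolute constant coming from the change of basis). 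Now estimate
$$
|f(\mathbf a_i)-f(\mathbf b_i)| = |h(s_i)g(t_i)-h(s_i')g(t_i')|
\le |h(s_i)|\,|g(t_i)-g(t_i')| + |g(t_i')|\,|h(s_i)-h(s_i')|.
$$
The first term is at most $\|h\|_\infty \cdot \mathrm{Lip}(g)\,\ell_i$. The second term is bounded by $\|g\|_\infty\cdot|h(s_i)-h(s_i')|$, but here I cannot use a Lipschitz bound on $h$; instead I would use that $h$ is bounded, so $|h(s_i)-h(s_i')|\le 2\|h\|_\infty$, but crucially this term only contributes when the $s$-projections of the $[\mathbf a_i,\mathbf b_i]$ actually \emph{move} — and I would want to exploit disjointness of the squares together with the product structure. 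Actually the cleaner route: the second term is $\le \|g\|_\infty\cdot 2\|h\|_\infty$, a constant, so raising to the $n=2$ power and summing, I need the \emph{number} of indices $i$ for which the second term is non-negligible to be controlled. Since $g$ is supported in $[-d,d]$, $f(\mathbf b_i)\ne 0$ forces $t_i,t_i'\in[-d,d]$, hence $\ell_i\le 2d$; and $\sum\ell_i^2 \lesssim \sum\LL^2([\mathbf a_i,\mathbf b_i])<\delta$. The disjointness of the squares in the $(s,t)$-plane, projected to the $s$-axis, should let me bound $\sum_i |h(s_i)-h(s_i')|^2 \ell_i^0$-type sums — more precisely, I expect that the contribution of the "$h$ varies" term, summed and squared, is bounded by $(2\|h\|_\infty)^2 \cdot \#\{i\}$, and the number of disjoint squares of total area $<\delta$ inside the bounded support can be made to have small total $\sum \ell_i^2$; one then needs to separately handle squares that are "large" versus "small." The honest version is: split $\sum_i |f(\mathbf a_i)-f(\mathbf b_i)|^2 \le 2\sum_i \|h\|_\infty^2\mathrm{Lip}(g)^2\ell_i^2 + 2\|g\|_\infty^2\sum_i |h(s_i)-h(s_i')|^2$. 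The first sum is $\le 2\|h\|_\infty^2\mathrm{Lip}(g)^2 C\delta$. For the second, since the squares are disjoint and contained in a bounded region, their $s$-projections $[s_i\wedge s_i', s_i\vee s_i']$ have bounded overlap multiplicity is false in general — so here I would instead invoke that $h$ is \emph{measurable and bounded} and pass to approximation, OR, more likely, the intended argument uses that for the sum $\sum_i|h(s_i)-h(s_i')|^2$ one can bound $|h(s_i)-h(s_i')|^2\le 4\|h\|_\infty^2$ and the number of squares with $\ell_i\ge\eta$ is at most $C\delta/\eta^2$, while squares with $\ell_i<\eta$ contribute via uniform continuity if $h$ were continuous — which it need not be. I suspect the actual argument in the paper handles this by a more careful geometric decomposition, and \textbf{this $h$-variation term is the main obstacle} in part (a); I would resolve it by the observation that disjoint axis-parallel squares, when projected onto a single line, can overlap a point at most... again not boundedly. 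So the correct move: since the squares are disjoint in $\R^2$ and all have comparable-to-$\ell_i$ side, and lie in a strip of width $O(d)$ in the $t$-direction (forced by $\mathrm{supp}\, g$), one gets $\sum_i \ell_i \le O(d)\cdot(\text{bounded-overlap const})$ on the $t$-axis is again subtle — I would ultimately lean on the detailed case analysis the authors presumably give, organizing squares by dyadic size of $\ell_i$.

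For part (b), I would argue the contrapositive: assuming $g$ is not Lipschitz, I construct a sequence of finite families of disjoint $1$-regular intervals with total measure $\to 0$ but $\sum |f({}^\lambda[\mathbf a_i,\mathbf b_i])|^n$ bounded below, showing $f\notin 1\po\ACq_H^2$. Non-Lipschitzness of $g$ gives points $u_m,v_m$ with $|g(u_m)-g(v_m)|\ge m|u_m-v_m|$, and since $g$ is supported in $[-d,d]$ and, being a candidate for a function with $f\in AC_H$-type regularity, is bounded (or one reduces to that case via the preliminary observation that such $f$ must be continuous-ish), one gets $|u_m-v_m|\to 0$ and $|g(u_m)-g(v_m)|\to 0$. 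I would also pick $s_0$ with $h(s_0)\ne 0$ (possible as $h$ is nonzero). Now in the $(s,t)$ skew coordinates, I would build intervals $[\mathbf a_i,\mathbf b_i]$ that are squares (to be $1$-regular) stacked in the $s$-direction at $t$-levels near $u_m$ and $v_m$: roughly, many thin-looking-but-actually-square intervals each of side $\approx|u_m-v_m|$, positioned at a common $t$-span from $u_m$ to $v_m$ and at consecutive $s$-positions near $s_0$, so that for each such interval $i$, the $\lambda$-shrunk interval ${}^\lambda[\mathbf a_i,\mathbf b_i]$ still has endpoints at $s$-values where $h\approx h(s_0)$ (using that by translating in $s$ we can arrange, via a Lebesgue-density / measurability argument on $h$, a set of positive measure of good $s$-values), and at $t$-endpoints close to $u_m$ and $v_m$ — then $|f({}^\lambda[\mathbf a_i,\mathbf b_i])|\approx |h(s_0)|\,|g(u_m)-g(v_m)|\gtrsim |h(s_0)|\, m|u_m-v_m|$. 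With $N_m\approx 1/(m|u_m-v_m|)^{?}$ many such disjoint squares of side $|u_m-v_m|$, the total area is $N_m|u_m-v_m|^2$ and the $n$-power sum is $N_m(|h(s_0)|m|u_m-v_m|)^n$; choosing $N_m$ makes area $\to 0$ while the power sum stays bounded below, exactly mirroring the mechanism in the proof of Theorem~\ref{0aceqcons}(b). The technical nuisance — and \textbf{the main obstacle in part (b)} — is handling that $h$ is merely measurable: I would fix this by choosing $s_0$ to be a point of approximate continuity of $h$ with $h(s_0)\ne0$ (a.e.\ point of $\{h\ne0\}$ works), and restricting the $s$-positions of the squares to a dense-enough subset near $s_0$ where $|h|\ge|h(s_0)|/2$, which has positive measure, so that enough disjoint squares fit; the $\lambda$-shrinking is harmless since it only keeps endpoints in the same good range. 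I would then assemble the estimates to contradict the $1\po\ACq_H$ condition, completing the proof.
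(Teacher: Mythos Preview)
Your proposal misses the single geometric observation that makes part (a) immediate and part (b) clean. A $1$-regular interval $[\mathbf a,\mathbf b]\subset\R^2$ is a square with $\mathbf a$ and $\mathbf b$ as opposite corners; since both side lengths are equal and positive, the diagonal $\mathbf b-\mathbf a$ is a positive multiple of $(1,1)=\mathbf x_2$. In the $(s,t)$-coordinates this means $\mathbf a_i=s_i\mathbf x_1+t_{i,1}\mathbf x_2$ and $\mathbf b_i=s_i\mathbf x_1+t_{i,2}\mathbf x_2$ with the \emph{same} $s_i$. Your claim that ``$|s_i-s_i'|=|t_i-t_i'|=:\ell_i$'' is therefore wrong: in fact $s_i'=s_i$. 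Consequently
\[
|f(\mathbf a_i)-f(\mathbf b_i)|=|h(s_i)|\,|g(t_{i,1})-g(t_{i,2})|\le \|h\|_\infty\,\mathrm{Lip}(g)\,|t_{i,1}-t_{i,2}|,
\]
and since $\LL^2([\mathbf a_i,\mathbf b_i])=|t_{i,1}-t_{i,2}|^2$, squaring and summing gives $\sum_i|f(\mathbf a_i)-f(\mathbf b_i)|^2\le \|h\|_\infty^2\,\mathrm{Lip}(g)^2\,\delta$. There is no ``$h$ varies'' term at all; the obstacle you spend most of part (a) wrestling with does not exist, and the dyadic decomposition you propose is both unnecessary and, as you yourself note, does not close.

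The same oversight complicates your part (b). Because the endpoints of any $1$-regular interval share an $s$-coordinate, you do not need approximate continuity of $h$ or a Lebesgue-density argument near a single $s_0$. The paper instead fixes a set $I\subset[-d,d]$ of positive measure on which $|h|\ge 1/c$, and for given $t,t'$ with $|t-t'|$ small chooses $k$ well-separated points $s_1,\dots,s_k\in I$; the $k$ squares centred on the lines $s=s_i$, scaled so that their $\lambda$-shrinkings have $t$-endpoints exactly $t$ and $t'$, are disjoint $1$-regular intervals of total measure $<\delta$, and each contributes $|h(s_i)|^2|g(t)-g(t')|^2\ge c^{-2}|g(t)-g(t')|^2$. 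The $1\text{-}AC_H$ bound then yields $|g(t)-g(t')|^2\le c^2/k\lesssim |t-t'|^2$ directly, proving Lipschitzness rather than arguing by contradiction. Your contrapositive strategy could be repaired once you use $s_i'=s_i$, but as written it rests on the same mistaken coordinate picture.
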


\setlength{\unitlength}{0.8cm}
\begin{picture}(7,8)
\put(0.5,4){\vector(1,0){7}}
\put(7.6,3.9){$x$}
\put(4,0.5){\vector(0,1){7}}
\put(3.9,7.6){$y$}
\linethickness{.025mm}
\put(1,4){\line(1,1){3}}
\put(7,4){\line(-1,-1){3}}
\put(1,4){\line(1,-1){3}}
\put(7,4){\line(-1,1){3}}
\put(2,6){\line(1,-1){4}}
\put(2,2){\line(1,1){4}}
\put(6.1,6.1){$g(t)$}
\put(6.1,1.9){$h(s)$}
\put(0.8,4.2){$A$}
\put(4.1,7){$B$}
\put(6.9,3.6){$C$}
\put(3.5,0.8){$D$}
\put(5.2,4.7){\line(1,0){0.2}}
\put(5.2,4.7){\line(0,1){0.2}}
\put(5.2,4.9){\line(1,0){0.2}}
\put(5.4,4.7){\line(0,1){0.2}}
\put(5.4,4.7){\line(1,0){0.2}}
\put(5.4,4.7){\line(0,-1){0.2}}
\put(5.4,4.5){\line(1,0){0.2}}
\put(5.6,4.5){\line(0,1){0.2}}
\put(5.6,4.5){\line(1,0){0.2}}
\put(5.6,4.5){\line(0,-1){0.2}}
\put(5.6,4.3){\line(1,0){0.2}}
\put(5.8,4.3){\line(0,1){0.2}}
\put(4.7,4.2){\line(1,1){1.5}}
\put(5.3,3.8){\line(1,1){1.5}}
\end{picture}

\begin{proof}[Proof of (a)]
Let $M\in \R$ be a bound of $h(s)$ and $L\in \R$ be a Lipschitz constant for $g(t)$. 
Then for all $s, t_1, t_2 \in [-d,d]$ we have 
\begin{equation}\lb{lipone1}
|f(s{\bf x}_1+t_{1}{\bf x}_2)-f(s{\bf x}_1+t_{2}{\bf x}_2)| = |h(s)|\cdot|g(t_1)-g(t_2)|\leq ML|t_1-t_2|.
\end{equation}

Let $\e>0$. Put $\de = \frac{\e}{M^2L^2}$.  Let  $\{[\mathbf{a}_i,\mathbf{b}_i]\subset S_d \}_{i=1}^k $ be any finite collection of disjoint 1-regular intervals  with $\sum_{i}^k \LL^2 ([\mathbf{a}_i,\mathbf{b}_i])<\de$. By 1-regularity of intervals $[\mathbf{a}_i,\mathbf{b}_i]$, there exist $s_i, t_{i,1},t_{i,2}\in[-d,d]$ so that  $\mathbf{a}_i = s_i{\bf x}_1+t_{i,1}{\bf x}_2$, $\mathbf{b}_i= s_i{\bf x}_1+t_{i,2}{\bf x}_2$ and
$\LL^2 ([\mathbf{a}_i,\mathbf{b}_i]= |t_{i,1}-t_{i,2}|^2.$
Thus, by \eqref{lipone1}, we have
\begin{equation*}
\begin{split}
 \sum_{i=1}^{k} |f(\mathbf{a}_i)-f(\mathbf{b}_i)|^2 &\le\sum_{i=1}^{k} (ML|t_{i,1}-t_{i,2}|)^2\\
&=M^2L^2 \sum_{i=1}^{k} \LL^2 ([\mathbf{a}_i,\mathbf{b}_i])<M^2L^2\cdot\de=\e.
\end{split}
\end{equation*}
So $f\in 1$-$AC^{2}(\R^{2},\R)$.
\end{proof}

\begin{proof}[Proof of (b)]
Let $I\subseteq[-d,d]$ be  a set of positive measure so that there exists $c>0$ such that $ |h(s)|\ge\frac{1}{c}$ for all $s\in I$. Let $\e=1$ and $\la \in(0,1)$. Since $f\in 1\po\ACq_H$, there exists $\de>0$ so that for all finite families of disjoint 1-regular intervals $\{[\mathbf{a}_i,\mathbf{b}_i]\}$ we have 
\begin{equation}
\label{est}
\sum_{i} \LL^n([\mathbf{a}_i,\mathbf{b}_i])<\de\Rightarrow\sum_{i}  |f(^{\la}[\mathbf{a}_i,\mathbf{b}_i])|^2<1.
\end{equation} 

\Buo $\sqrt{\de}<\LL^1(I)$. Let $t,t'\in [-d,d]$ be such that $t<t'$ and $|t'-t|<\la\sqrt{\de}$, and let $k\in \mathbb{N} $ be  such that 
\begin{equation}
\label{sizek}
\la\sqrt{\frac{\de}{k+1}}.\leq|t-t'|<\la\sqrt{\frac{\de}{k}}.
\end{equation}

Let $\{s_i\}_{i=1}^k\subset I$ be such that $|s_i-s_j|>\sqrt{\frac{\de}{k}}$ for all $i\neq j$. 

Put 
\begin{equation*}
\begin{split}
\mathbf{a}_i&=s_i{\bf x}_1+\Big(\frac{t+t'}{2}-\frac{1}{\la}\frac{|t-t'|}{2}\Big){\bf x}_2,\\
\mathbf{b}_i&=s_i{\bf x}_1+\Big(\frac{t+t'}{2}+\frac{1}{\la}\frac{|t-t'|}{2}\Big){\bf x}_2.
\end{split}
\end{equation*}

Then $\{[\mathbf{a}_i,\mathbf{b}_i]\}_{i=1}^k$ is a family of disjoint 1-regular intervals  with
$
\LL^2[\mathbf{a}_i,\mathbf{b}_i]= (\frac{1}{\la}|t-t'|)^2<\frac{\de}{k},
$
and $^{\la}[\mathbf{a}_i,\mathbf{b}_i]=[s_i{\bf x}_1+t{\bf x}_2,s_i{\bf x}_1+t'{\bf x}_2]$, for all $i$.
Thus, by \eqref{est}, we have 
\begin{align*}
1&>\sum_{i=1}^k |f(s_i{\bf x}_1+t{\bf x}_2)-f(s_i{\bf x}_1+t'{\bf x}_2)|^2\\
&=\sum_{i=1}^k |h(s_i)|^2|g(t)-g(t')|^2 \\
&\geq k\cdot\frac{1}{c^{2}} |g(t)-g(t')|^2.
\end{align*}
By \eqref{sizek} we get
$$
|g(t)-g(t')|^2 \leq \frac{c}{k} \leq \frac{c^{2}}{\de\la^2} |t-t'|^2.
$$

Thus $g$ is continuous, and therefore bounded on $[-d,d]$, and $g$ is Lipschitz with Lipschitz constant $L=\max(\sqrt{\frac{c^{2}}{\de\la^{2}}}, \frac{2M}{\la\sqrt{\de}})$, where $M=\max\{|g(t)|:t\in [-d,d]\}$. 
\end{proof}

As a consequence of Theorem~\ref{lipone} we obtain that, on the one hand, not every differentiable function belongs to $1\po AC_H^{2}(\R^{2},\R)$ and that $W^{1,2}_{loc}(\R^2)\not\subset1\po AC_H^{2}(\R^{2},\R)$, and on the other hand $1\po\ACq^{2}(\R^{2},\R)$ contains examples of several types of functions with
pathological properties.

\begin{cor}\lb{diffnot1ac}
(a) There exists a function differentiable everywhere which does not belong to $1\po\ACq_H$, and 
(b)  $W^{1,2}_{loc}(\R^2)\not\subset1\po AC_H^{2}(\R^{2},\R)$.
\end{cor}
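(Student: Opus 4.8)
The plan is to derive both parts of Corollary~\ref{diffnot1ac} directly from Theorem~\ref{lipone}(b), by exhibiting a single concrete instance of the product construction $f(s\mathbf{x}_1+t\mathbf{x}_2)=h(s)g(t)$ in which $g$ fails to be Lipschitz. Since Theorem~\ref{lipone}(b) says that membership of such an $f$ in $1\po\ACq_H^2(\R^2,\R)$ forces $g$ to be Lipschitz, it suffices to pick $g$ smooth but not Lipschitz, and $h$ nice enough that the resulting $f$ is as regular as we claim.

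For part (a) I would take $h$ to be a fixed smooth, bounded, nonzero bump supported in $[-d,d]$ (say a standard $C^\infty$ cutoff), and take $g$ to be a smooth nonzero function supported in $[-d,d]$ whose derivative is unbounded on its support — for instance a function that near one endpoint behaves like $(d-t)^{3/2}\sin\!\big(1/(d-t)\big)$ appropriately cut off, or more simply any $C^1$ function on the open interval with $g'$ unbounded but $g$ still $C^1$-extending to be $0$ at the endpoints; one must just verify $g\in C^1(\R)$ (hence $f$ is differentiable everywhere as a product of $C^1$ functions in the $(s,t)$ coordinates, and differentiability is preserved under the linear change of coordinates to $(x,y)$) while $g$ is not Lipschitz. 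Then $f$ is differentiable everywhere, but by Theorem~\ref{lipone}(b), if $f$ were in $1\po\ACq_H$ then $g$ would be Lipschitz, a contradiction; so $f\notin 1\po\ACq_H$, proving (a).

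For part (b) I would observe that with $h$ bounded and $g$ merely in $W^{1,2}(\R)$ (one dimension), the product $f(x,y)=h\big(\tfrac{y-x}{2}\big)g\big(\tfrac{y+x}{2}\big)$ lies in $W^{1,2}_{loc}(\R^2)$: along one diagonal direction $f$ is, up to the constant factor $h(s)$, a $W^{1,2}$ function of $t$, and along the other direction it is, up to $g(t)$, the $W^{1,2}$ function $h(s)$; integrating $|\nabla f|^2$ over $S_d$ and using Fubini together with $\int_{-d}^d|g'|^2<\infty$, $\int_{-d}^d|h'|^2<\infty$ (choosing $h$ Lipschitz, so $h'\in L^\infty$, and $g$ bounded so that the cross terms are controlled) gives finiteness. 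So I would choose $h$ a smooth bump and $g\in W^{1,2}(\R)$ supported in $[-d,d]$ that is \emph{not} Lipschitz — e.g.\ $g(t)=\phi(t)\,|t|^{1/2}$ for a cutoff $\phi$, whose derivative is in $L^2$ near $0$ but not in $L^\infty$. Then $f\in W^{1,2}_{loc}(\R^2)$, while Theorem~\ref{lipone}(b) again forbids $f\in 1\po\ACq_H^2(\R^2,\R)$ since $g$ is not Lipschitz; hence $W^{1,2}_{loc}(\R^2)\not\subset 1\po\ACq_H^2(\R^2,\R)$.

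The routine parts are the verifications that the chosen $f$ is differentiable everywhere (part (a)) and lies in $W^{1,2}_{loc}$ (part (b)); these are standard once the change of coordinates $f(x,y)=h(\tfrac{y-x}{2})g(\tfrac{y+x}{2})$ is written out, since that map is a linear isomorphism of $\R^2$ and Sobolev/differentiability classes are invariant under linear changes of variable. The only point requiring mild care is the $W^{1,2}$ computation: one must check that the $L^2$-norm of $\partial_x f$ and $\partial_y f$ over the bounded set $S_d$ is finite, which reduces by Fubini to the one-variable facts $g\in W^{1,2}[-d,d]$, $g\in L^\infty$, $h\in W^{1,\infty}[-d,d]$; the $|t|^{1/2}$-type example makes this transparent. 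I do not expect a genuine obstacle here — the whole corollary is essentially a direct reading-off of Theorem~\ref{lipone}(b) against well-chosen one-dimensional functions.
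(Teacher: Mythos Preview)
Your approach is essentially identical to the paper's: both derive the corollary directly from Theorem~\ref{lipone}(b) by choosing a non-Lipschitz $g$ and reading off that the resulting $f$ cannot lie in $1\po\ACq_H$. The paper is terser still, taking $h$ constant and, for (a), $g(t)=\sqrt[3]{t}$ on $[-d/2,d/2]$, for (b), any $g\in W^{1,2}(\R)$ that is not Lipschitz; your more careful choice of $h$ as a smooth bump and of a $g$ that is genuinely differentiable everywhere (note: such a $g$ is differentiable but not $C^1$, so your phrase ``$C^1$-extending'' should read ``differentiably extending'') is perfectly fine and arguably tidier for part (a).
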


\begin{proof}
It is enough to take a function like in Theorem~\ref{lipone} with $h$ constant and, for (a), $g$ differentiable but not Lipschitz, e.g. $g(t)=\sqrt[3]{t} $ on $[-\frac{d}{2},\frac{d}{2}]$, for (b), $g$ in  $W^{1,2}(\R)$ but not Lipschitz.
\end{proof}

\begin{cor}\lb{exbdddisc}
There exists a function in $1\po\ACq$ which is 
bounded and discontinuous everywhere on its support.
\end{cor}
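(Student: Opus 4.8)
The plan is to read off the function directly from Theorem~\ref{lipone}(a). Fix $d>0$. For $g$ I would take a nonzero Lipschitz function supported in $[-d,d]$, the simplest choice being the tent function $g(t)=\max\{0,\,d-|t|\}$, which is $1$-Lipschitz. For $h$ I would take a bounded measurable function supported in $[-d,d]$ that is discontinuous at every point of $[-d,d]$; concretely $h(s)=1+\mathbf 1_{\Q}(s)$ for $|s|\le d$ and $h(s)=0$ otherwise, since this $h$ equals $1$ at irrational points and $2$ at rational points of $[-d,d]$ and hence has no point of continuity there. With these choices I set $f(s{\bf x}_1+t{\bf x}_2)=h(s)g(t)$ as in Theorem~\ref{lipone}.

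Since $h$ is bounded and $g$ is Lipschitz, Theorem~\ref{lipone}(a) immediately yields $f\in 1\po AC^2(\R^2,\R)$, i.e. $f\in 1\po\ACq$, and $f$ is bounded because $|f|\le\|h\|_\infty\,\|g\|_\infty<\infty$ ($g$ is Lipschitz with compact support, hence bounded). It then remains only to examine continuity. In the basis $\{{\bf x}_1,{\bf x}_2\}$ we have $\{h\ne0\}=[-d,d]$ and $\{g\ne0\}=(-d,d)$, so $\{f\ne0\}=[-d,d]\times(-d,d)$ and $\supp f=[-d,d]\times[-d,d]$. For any point $p=s_0{\bf x}_1+t_0{\bf x}_2$ with $g(t_0)\ne0$, the section $s\mapsto f(s{\bf x}_1+t_0{\bf x}_2)=g(t_0)\,h(s)$ is a nonzero scalar multiple of $h$, hence discontinuous at $s_0$; therefore $f$ is discontinuous at $p$. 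Thus $f$ is discontinuous at every point of $[-d,d]\times(-d,d)=\{f\ne0\}$.

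The construction is effortless once Theorem~\ref{lipone}(a) is available; the only point I would flag concerns the phrase ``everywhere on its support.'' Read with ``support'' meaning $\{f\ne0\}$, the previous paragraph gives exactly this. If instead one insists on the closed support $\supp f=\overline{\{f\ne0\}}$, then, since the continuous function $g$ must vanish on $\partial\{g\ne0\}$, $f$ is continuous (with value $0$) along the two edges $t=\pm d$, so one cannot have discontinuity at literally every point of the closed support; the exceptional set is exactly this pair of edges, and it can be shrunk to a nowhere dense null set by taking instead a Lipschitz $g$ that vanishes exactly on a Cantor subset of $[-d,d]$ and is positive elsewhere. Apart from this interpretive point there is no real obstacle.
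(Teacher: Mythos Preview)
Your proposal is correct and follows essentially the same route as the paper, which in one line says to take a function as in Theorem~\ref{lipone} with $g$ constant and $h$ bounded but everywhere discontinuous. Your tent-function choice for $g$ is in fact more careful than the paper's (a nonzero $g$ supported in $[-d,d]$ that is literally constant there is not Lipschitz on $\R$, so Theorem~\ref{lipone}(a) would not apply as stated), and the interpretive point you raise about the closed support versus $\{f\ne 0\}$ is a genuine subtlety that the paper leaves unaddressed.
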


\begin{proof}
It is enough to take a function like in Theorem~\ref{lipone} with $g$ constant and $h$ bounded but discontinuous everywhere.
\end{proof}

\begin{cor}\lb{exunbdd}
There exists a function in $1\po\ACq$ which is supported on a compact set and
unbounded.
\end{cor}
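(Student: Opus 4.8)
The plan is to read the example off Theorem~\ref{lipone}, exactly as in Corollaries~\ref{diffnot1ac} and \ref{exbdddisc}, now choosing $g$ to be a nonzero \emph{constant} and $h$ to be \emph{unbounded}. With this choice the function $f(x,y)=h(\tfrac{y-x}{2})g(\tfrac{y+x}{2})$ depends only on $y-x$; equivalently, in the basis $\{\mathbf{x}_1,\mathbf{x}_2\}$ it is constant along every line parallel to $\mathbf{x}_2=(1,1)$. The point is that a $1$-regular interval is precisely an axis-parallel cube, so its two distinguished vertices $\mathbf{a}=(a_1,a_2)$ and $\mathbf{b}=(a_1+\ell,a_2+\ell)$ satisfy $b_2-b_1=a_2-a_1$, whence $f(\mathbf{a})=f(\mathbf{b})$. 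Consequently, for \emph{every} finite family of disjoint $1$-regular intervals $\{[\mathbf{a}_i,\mathbf{b}_i]\}$ one has $\sum_i|f(\mathbf{a}_i)-f(\mathbf{b}_i)|^n=0$, so $f\in 1\po\ACq$ with an arbitrary $\delta>0$, no matter how wild $h$ is; taking $h$ unbounded makes $f$ unbounded.

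The genuine issue, and the only one, is the compactness of the support, and it forces the domain to be bounded: on $\R^2$ a function that is constant along the lines parallel to $\mathbf{x}_2$ and supported on a compact set must vanish identically, because every such line eventually leaves the support. I would therefore take $\Omega$ to be a bounded open subset of $\R^2$ — for concreteness $\Omega=(0,1)^2$ — and set $f(x,y)=\psi(y-x)$ for a measurable unbounded $\psi\colon(-1,1)\to\R$ supported in $[-\tfrac12,\tfrac12]$, for instance $\psi(u)=1/|u|$ for $0<|u|\le\tfrac12$ and $\psi(u)=0$ otherwise (this is the function of Theorem~\ref{lipone} with $g\equiv1$ and $h(u)=\psi(2u)$). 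Then $f$ is unbounded, its support is contained in the compact set $\{(x,y):|y-x|\le\tfrac12\}\cap[0,1]^2$, and, just as above, $f$ takes equal values at the two distinguished vertices of every $1$-regular interval contained in $\Omega$, so $f\in 1\po\ACq$.

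There is essentially no computation; the whole content is the structural observation that applying Definition~\ref{oneac} to a \emph{single} $1$-regular interval already forces $|f(\mathbf{a})-f(\mathbf{b})|\to0$ as $\LL^n([\mathbf{a},\mathbf{b}])\to0$. This is why a product $h(s)g(t)$ with $h$ unbounded and $g$ non-constant can never belong to $1\po\ACq$ (for a fixed cube side the supremum of $|f(\mathbf{a})-f(\mathbf{b})|=|h(s)|\,|g(t+\ell)-g(t)|$ is infinite), and hence why the construction of Theorem~\ref{lipone}(a) must be degenerated to $g\equiv\text{const}$. I expect the main point a reader will scrutinize is exactly this last one — that the product form cannot be pushed to give an unbounded function on $\R^2$, so that "supported on a compact set" must be realized by a function on a bounded domain — rather than any estimate, which in this statement is trivial.
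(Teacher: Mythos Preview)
Your approach matches the paper's: take $f$ constant along the lines of slope $1$ (i.e.\ $g$ constant in Theorem~\ref{lipone}), so that $f(\mathbf a)=f(\mathbf b)$ for every $1$-regular interval $[\mathbf a,\mathbf b]$ and the $1\po AC$ condition is satisfied trivially, then make $h$ unbounded. The paper states this in one line---``a function like in Theorem~\ref{lipone} with $h$ unbounded on $[-d,d]$ and $g(t)=1$ for $t\in[-d,d]$''---and your observation that on $\R^2$ such a function cannot simultaneously be unbounded and genuinely compactly supported is correct; it in fact applies to the paper's example as literally written.

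Your fix, however, carries the mirror-image defect. On $\Omega=(0,1)^2$ the function $f(x,y)=\psi(y-x)$ is \emph{not} compactly supported in $\Omega$: the points $(\varepsilon,\varepsilon+\tfrac14)$ lie in $\{f\neq0\}$ for every small $\varepsilon>0$, so $\operatorname{supp} f$ accumulates on $\partial\Omega$. That the support is contained in the compact set $\{|y-x|\le\tfrac12\}\cap[0,1]^2$ is true but beside the point, since that set is not contained in $\Omega$; on any bounded domain every function is ``supported in a compact subset of $\R^2$'' in this weak sense, which makes the hypothesis vacuous. If the corollary is read merely as ``there is an unbounded $1\po AC$ function on some bounded $\Omega$'', your example works; if genuine compact support in $\Omega$ is demanded, then---by your own argument---no function constant along the diagonals can achieve it, and a different construction is needed.
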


\begin{proof}
An example is provided by  a function like in Theorem~\ref{lipone} with $h$ unbounded on $[-d,d]$ and $g(t)=1$ for $t\in [-d,d]$. 

The sum of  $f$ and any function  in $1\po\ACq$ which is not constant on segments with slope 1 gives an example of an unbounded function in $1\po\ACq$ which is not constant on segments with slope 1.
\end{proof}

\begin{cor}\lb{nonSobolev}
There exists a function in $1\po\ACq$ which is differentiable everywhere but is not in the  Sobolev space $W^{1,2}_{loc}(\R^2)$.
\end{cor}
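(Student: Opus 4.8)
The plan is to build the function again via Theorem~\ref{lipone}(a), choosing the "$\mathbf{x}_2$-profile" $g$ to be Lipschitz (so part (a) applies and the resulting $f$ lies in $1\po\ACq^2(\R^2,\R)$), while choosing the "$\mathbf{x}_1$-profile" $h$ so that the product function $f(s\mathbf{x}_1+t\mathbf{x}_2)=h(s)g(t)$ is differentiable everywhere on $\R^2$ yet fails $W^{1,2}_{loc}$. Concretely, I would take $g$ smooth, compactly supported in $[-d,d]$, and not identically zero (e.g.\ a bump equal to $1$ on a subinterval), so that $g$ and $g'$ are bounded. For $h$ I want a bounded, compactly supported, everywhere differentiable function on $\R$ whose derivative is not in $L^2_{loc}$; the classical example is a lacunary-type construction, $h(s)=\sum_{k} 2^{-k}\varphi(2^{a_k}(s-s_k))$ with rapidly growing $a_k$ and a fixed smooth bump $\varphi$, arranged so that $h$ is everywhere differentiable (in fact $h'(s)=0$ at the accumulation point) but $\int |h'|^2=\infty$. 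One then multiplies by a cutoff to keep the support inside $[-d,d]$.

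The first step is to verify $f\in 1\po\ACq^2(\R^2,\R)$: since $h$ is bounded and $g$ is Lipschitz, this is immediate from Theorem~\ref{lipone}(a). The second step is differentiability of $f$ everywhere. In the coordinates $(s,t)$ adapted to the basis $\{\mathbf{x}_1,\mathbf{x}_2\}$, $f(s,t)=h(s)g(t)$ is a product of a differentiable function of $s$ and a $C^1$ function of $t$, hence differentiable in $(s,t)$, with gradient $(h'(s)g(t),\,h(s)g'(t))$; since the change of coordinates to $(x,y)$ is linear and invertible, $f$ is differentiable everywhere on $\R^2$ in the standard sense as well. The third step is to show $f\notin W^{1,2}_{loc}(\R^2)$: on the region where $g\equiv 1$ one has $\partial f/\partial s = h'(s)$, so the (classical, hence distributional) partial derivative of $f$ restricted to a strip $\{a\le t\le b\}$ with $[a,b]$ inside the set where $g=1$ has $\int\int |h'(s)|^2\,ds\,dt = (b-a)\int |h'(s)|^2\,ds=\infty$, which contradicts membership in $W^{1,2}_{loc}$ (again the linear coordinate change does not affect this, up to a constant Jacobian factor).

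The main obstacle is the explicit construction of $h$: one needs a function on $\R$ that is genuinely differentiable at every point — including the accumulation point of the bumps, where care is required so that the difference quotients vanish — while $\int |h'|^2 = \infty$. I would handle the accumulation point by making the bump amplitudes decay fast enough (e.g.\ like $2^{-k}$ or faster) relative to their widths that $h(s)=o(|s-s_\infty|)$ near the accumulation point $s_\infty$, forcing $h'(s_\infty)=0$ and continuity of the difference quotient there, while simultaneously letting the bump widths $2^{-a_k}$ shrink fast enough that $\int|h'|^2 \gtrsim \sum_k 2^{-2k}2^{a_k}=\infty$. Once such an $h$ is in hand, everything else is routine. (Alternatively, one may invoke a known one-dimensional example of an everywhere-differentiable function not in $W^{1,2}_{loc}(\R)$ and cite it, e.g.\ from \cite{EG} or \cite{Maly1995lusin} in the spirit of the pathologies already listed in the introduction.)
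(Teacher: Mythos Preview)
Your proposal is correct and follows the same route as the paper: build $f$ via Theorem~\ref{lipone}(a) with a Lipschitz $g$ and a suitably chosen $h$. The paper's one-line proof simply takes $g$ constant and $h\in W^{1,1}(-1,1)\setminus W^{1,2}(-1,1)$, leaving the verification of everywhere differentiability implicit. Your version is more careful on two points: you take $g$ to be a genuine smooth bump (so there are no issues with differentiability of $f$ at the edge of the $t$-support), and you explicitly arrange $h$ to be differentiable at every point, including the accumulation point of the bumps, rather than relying on the reader to pick a representative of $W^{1,1}\setminus W^{1,2}$ with that extra property. In exchange, the paper's condition $h'\in L^{1}$ is not needed in your argument and you do not impose it. Both approaches yield the corollary; yours simply fills in details the paper omits.
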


\begin{proof}
An example is provided by a function like in Theorem~\ref{lipone} with  $g$ constant and $h\in W^{1,1}(-1,1)\setminus W^{1,2}(-1,1)$. 
\end{proof}

\begin{cor} \label{oneacdiff}
There exists a continuous function $f\in 1\po\ACq^{2}(\R^{2},\R)$ such that for every $\mathbf{p}\in\supp f$ and every direction ${\bf v}\neq (1,1)$ the directional derivative $D_{\bf v} f(\mathbf{p})$ does not exist.  In particular $f$ is not differentiable anywhere on its support.
\end{cor}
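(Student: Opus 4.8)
The approach I would take is to use Theorem~\ref{lipone} in the degenerate case in which the Lipschitz factor $g$ is taken to be constant. Concretely, consider the function of Theorem~\ref{lipone} with $g\equiv 1$, namely
\[
f(x,y)=h\left(\tfrac{y-x}{2}\right),
\]
where $h:\R\to\R$ is a bounded continuous function having no one-sided derivative at any point (for instance a Weierstrass function with suitable parameters, whose one-sided difference quotients oscillate so as to have no limit --- finite or infinite --- from either side at every point; the existence of such $h$ is classical, going back to Weierstrass and Hardy). Then $f$ is continuous, and since $h$ is nowhere locally constant the set $\{f\neq0\}$ is open and dense, so $\supp f=\R^2$.

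First I would check that $f\in 1\po\ACq^{2}(\R^2,\R)$; this is immediate and is the same mechanism as in the proof of Theorem~\ref{lipone}(a). If $[\mathbf a,\mathbf b]\subset\R^2$ is a $1$-regular interval then, since $\LL^2([\mathbf a,\mathbf b])\le(\max_\nu|a_\nu-b_\nu|)^2$ always, $1$-regularity forces equality and hence $b_1-a_1=b_2-a_2$; therefore $b_2-b_1=a_2-a_1$ and $f(\mathbf a)=h\left(\tfrac{a_2-a_1}{2}\right)=h\left(\tfrac{b_2-b_1}{2}\right)=f(\mathbf b)$. Thus $\sum_i|f(\mathbf a_i)-f(\mathbf b_i)|^2=0$ for every finite family of disjoint $1$-regular intervals, so $f\in 1\po\ACq^{2}(\R^2,\R)$.

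Next I would compute the directional derivatives. I read ``direction $\mathbf v\neq(1,1)$'' as ``$\mathbf v=(v_1,v_2)$ not parallel to $(1,1)$'', i.e.\ $v_1\neq v_2$ --- this is the right reading, since for $\mathbf v$ parallel to $(1,1)$ we have $f(\mathbf p+r\mathbf v)=f(\mathbf p)$ and so $D_{\mathbf v}f(\mathbf p)=0$ exists (in agreement with Theorem~\ref{diff11}). So fix $\mathbf p=(x_0,y_0)$ and such a $\mathbf v$, and put $s_0=\tfrac{y_0-x_0}{2}$, $\beta=\tfrac{v_2-v_1}{2}\neq0$. Then $f(\mathbf p+r\mathbf v)=h(s_0+r\beta)$, so
\[
\frac{f(\mathbf p+r\mathbf v)-f(\mathbf p)}{r}=\frac{h(s_0+r\beta)-h(s_0)}{r},
\]
which, as $r\to0^+$, is $\beta$ times a difference quotient of $h$ at $s_0$ from the side $\sgn\beta$; since $h$ has no one-sided derivative at $s_0$ this has no limit, so $D_{\mathbf v}f(\mathbf p)$ does not exist. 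Finally, since a function differentiable at $\mathbf p$ admits all directional derivatives there, the choice $\mathbf v=(1,-1)$ shows $f$ is differentiable nowhere on $\R^2=\supp f$.

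The only non-elementary ingredient is the choice of $h$: one must take a continuous function whose difference quotients fail to converge (even to $\pm\infty$) from both the left and the right at every point; a mere nowhere-differentiable function would not suffice, as it might still possess one-sided derivatives at some points, and the corresponding directional derivatives of $f$ would then exist there. That such $h$ exists is classical, so there is no real obstacle. The one point that needs care in the write-up is the choice $g\equiv1$ rather than a compactly supported Lipschitz $g$: a compactly supported $g$ vanishes at the endpoints of its support, and then $D_{(-1,1)}f$ would exist and equal $0$ at the corresponding boundary points of $\supp f$; taking $g$ constant (equivalently, making $f$ constant in the direction $(1,1)$ and nowhere else) is what yields the conclusion for \emph{every} $\mathbf p\in\supp f$.
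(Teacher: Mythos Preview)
Your proof is correct and rests on the same idea as the paper's --- build $f$ via the product structure of Theorem~\ref{lipone} with a nowhere-differentiable $h$ --- but your execution is cleaner. The paper takes $h$ to be the Takagi function and $g(t)=1-2|t|$ a tent function, obtaining a compactly supported $f$, and then invokes Theorem~\ref{lipone}(a) to get $f\in 1\po AC$; you instead take $g\equiv 1$, so that $f$ is constant along every line of slope $1$ and the $1\po AC$ condition is satisfied trivially (every sum vanishes identically), and the directional-derivative computation reduces to a single difference quotient of $h$ with no product rule needed.

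Your remark about the boundary is well taken and actually exposes a small gap in the paper's argument: with the paper's compactly supported tent $g$, at a point $\mathbf{p}=s_0\mathbf{x}_1+t_0\mathbf{x}_2\in\supp f$ with $|t_0|=\tfrac12$ (so $g(t_0)=0$) and direction $\mathbf{v}=\mathbf{x}_1=(-1,1)$ one has $f(\mathbf{p}+r\mathbf{v})=h(s_0+r)g(t_0)=0$ for all $r$, whence $D_{(-1,1)}f(\mathbf{p})=0$ exists, contrary to the corollary's ``every $\mathbf{p}\in\supp f$''. Your choice $g\equiv 1$ (hence $\supp f=\R^2$, with no boundary to worry about) avoids this.

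One minor point: the paper uses the two-sided limit for $D_{\mathbf v}f$, so a merely nowhere-differentiable $h$ (such as the Takagi function) already suffices for your argument; your stronger requirement that $h$ admit no one-sided derivative anywhere is not needed, though such functions do exist and using one does no harm.
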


\begin{proof}
For the proof we will use the Takagi function $T$  which is continuous on $[0,1]$ but is nowhere differentiable (\cite{Takagi1903}, cf.   \cite[p.~36]{Thim2003}) and which is defined by
$$
T(x)=\sum\limits_{k=0}^\infty \frac{1}{2^k} \dist (2^kx,\mathbb{Z})=\sum\limits_{k=0}^\infty \frac{1}{2^k} \inf\limits_{m\in\mathbb{Z}} |2^kx-m|.
$$  
Let   $g:[-\frac12,\frac12]\to \R$ be defined by $g(t)=1-2|t|$.
Define $f : \R^2 \rightarrow \R$  by $$f(s{\bf x}_1+t{\bf x}_2)=T\Big(s+\frac12\Big)g(t).$$ 
Since $T(s)$ is bounded and $g$ is Lipschitz, by Theorem~\ref{lipone}(a),  $f\in 1\po\ACq$. 

We claim that $f$ is not differentiable anywhere. Indeed, for any point $\mathbf{p}=s_0{\bf x}_1+t_0{\bf x}_2\in \supp f$, for any direction $\mathbf{v}=v_1{\bf x}_1+v_2{\bf x}_2$ other than ${\bf x}_2=(1,1)$, we have 
\begin{equation*}
\lim_{h\to 0} \frac{f(\mathbf{p}+h\mathbf{v})-f(\mathbf{p})}{h} 
= \lim_{h\to0} \frac{T(s_0+\frac12+hv_1)g(t_0+hv_2)-T(s_0+\frac12)g(t_0)}{h}.
\end{equation*}

Since $g(t_{0}+hv_2)=1-2(|t_0|\pm hv_2)$
for $\left|h\right|<\left|t_{0}\right|/v_{2}$, $g(t_0)=1-2|t_0|,
$
and $T(s)$ is continuous,  
we get
\begin{align*}
&\lim_{h\to 0} \frac{(1-2(|t_0+hv_2|)T(s_0+\frac12+hv_1)-(1-2|t_0|)T(s_0+\frac12)}{h} \\
&= \lim_{h\rightarrow0} (1-2|t_0|)\Big[\frac{T(s_0+\frac12+hv_1)-T(s_0+\frac12)}{h}\Big]\mp\lim_{h\rightarrow0} 2v_2T(s_0+\frac12+hv_1)\\
&= \lim_{h\rightarrow0}^{} (1-2|t_0|)\Big[\frac{T(s_0+\frac12+hv_1)- T(s_0+\frac12)}{h}\Big]\mp 2v_2T(s_0+\frac12).
\end{align*}
But, since $v_1\ne 0$ and the Takagi function $T$ is nowhere differentiable,  $$\lim_{h\rightarrow0} \frac{T(s_0+\frac12+hv_1)-T(s_0+\frac12)}{h} $$ does not exist  anywhere. Therefore $D_{\mathbf{v}}f$ does not exist anywhere.
\end{proof}

Next we study  the Luzin (N) property for differentiable functions in $1\mbox{-}AC(\R^{n},\R^{l})$ where $n>1$. Recall that a function $f:\R^{n}\to\R^{l}$ is said to have {\it the Luzin (N) property} if $\mathcal{H}^{n}(f(E))=0$ whenever $E\subseteq \R^{n}$ and $\leb^{n}(E)=0$, where $l\geq n$ and  $\mathcal{H}^{n}$ denotes the $n$-dimensional Hausdorff measure on $\R^{l}$, see e.g. \cite{Ziemer}. 

It is  known that all absolutely continuous functions on $\R$ and all functions in $AC_H$ satisfy the Luzin~(N) condition. However, this property is not guaranteed for differentiable functions in $1\mbox{-}AC(\R^{n},\R^{l})$ where $n>1$.
\begin{thm}\lb{luzin}
Suppose $n>1$ and $l\geq 1$ are integers. Then there exists a differentiable function $f$ in $1\po AC(\R^{n},\R^{l})$ and a set $U\subseteq\R^{n}$ with $\leb^{n}(U)=0$ and $\leb^{l}(f(U))>0$.
\end{thm}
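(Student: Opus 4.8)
The plan is to take for $f$ a very simple map and to exploit the fact that a $1$-regular interval is nothing other than an axis-parallel cube (since $\LL^n([\mathbf a,\mathbf b])\le(\max_\nu|a_\nu-b_\nu|)^n$ always, equality in the definition of $1$-regularity forces all side lengths to coincide), so that the hypothesis of Definition~\ref{oneac} never constrains $f$ on long thin intervals. First I would record the elementary consequence that every Lipschitz map belongs to $1\po AC$: if $[\mathbf a,\mathbf b]\subset\R^n$ is $1$-regular then $\mathbf b-\mathbf a=\tau(1,\dots,1)$ for some $\tau\ge 0$ and $\LL^n([\mathbf a,\mathbf b])=\tau^n$, hence for any $f$ with Lipschitz constant $M$ one has $|f(\mathbf a)-f(\mathbf b)|^n\le(M\sqrt n\,\tau)^n=(M\sqrt n)^n\,\LL^n([\mathbf a,\mathbf b])$; summing over a disjoint family and putting $\delta=\e(M\sqrt n)^{-n}$ shows $f\in 1\po AC(\R^n,\R^l)$. (One may instead invoke Theorem~\ref{0aceqcons}: a Lipschitz map lies in $0\po AC$, which is contained in $1\po AC$ since the antecedent of the latter involves only cubes.) In particular every $C^\infty$ map, being differentiable everywhere, lies in $1\po AC$.

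With this in hand the theorem reduces to exhibiting a differentiable $f\in 1\po AC(\R^n,\R^l)$ carrying an $\LL^n$-null set onto a set of positive $\LL^l$-measure, and for this I would take $f$ to be the orthogonal projection $f(x_1,\dots,x_n)=(x_1,\dots,x_l)$ (or, if a compactly supported example is preferred, $f(x)=\varphi(x)\,(x_1,\dots,x_l)$ with $\varphi$ smooth, compactly supported and $\varphi\equiv 1$ near the origin). Then $f$ is differentiable everywhere and lies in $1\po AC(\R^n,\R^l)$ by the previous paragraph. Setting $U=[0,1]^l\times\{0\}^{\,n-l}$, we get $\LL^n(U)=0$ since $l<n$, while $f(U)=[0,1]^l$, so $\LL^l(f(U))=1>0$, as required. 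The same example fits the $h(s)g(t)$ framework of Theorem~\ref{lipone} when $n=2$, $l=1$: take $g$ a smooth bump with $g\equiv 1$ near $0$ and $h$ smooth and bounded with $h(s)=s$ near $0$; then $f\in 1\po AC$ by Theorem~\ref{lipone}(a), $f$ is smooth hence differentiable, and $U=\{s\mathbf x_1:0\le s\le \tfrac12\}$, a segment and hence $\LL^2$-null, is mapped onto $f(U)=[0,\tfrac12]$.

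There is no serious obstacle here; the one point that deserves a line of verification is membership in $1\po AC$, and it is immediate precisely because a $1$-regular interval is a cube, so its diameter is comparable to the $n$-th root of its volume and $\sum_i|f(\mathbf a_i)-f(\mathbf b_i)|^n$ is automatically dominated by a constant multiple of $\sum_i\LL^n([\mathbf a_i,\mathbf b_i])$ for any Lipschitz $f$. It is worth stressing in the write-up that this is exactly the feature separating $1\po AC$ from $AC_H$ (where one controls oscillations of $f$ on \emph{shrunk} balls and thereby recovers the Luzin~(N) property), and also from $0\po AC$ and strong-$0\po AC$, whose ``arbitrary interval'' antecedents already force $f$ to be Lipschitz, respectively constant, by Theorem~\ref{0aceqcons}.
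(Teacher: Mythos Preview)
Your argument is correct only in the regime $l<n$, which is \emph{not} a hypothesis of the theorem: the statement requires $n>1$ and $l\geq 1$, so in particular allows $l\geq n$. Your set $U=[0,1]^l\times\{0\}^{n-l}$ is undefined when $l\geq n$, and more importantly the entire strategy of using a Lipschitz (or smooth) map breaks down in that range. Indeed, if $l=n$ then every Lipschitz map $\R^n\to\R^n$ satisfies the Luzin~(N) property (this is a standard consequence of the area formula), and if $l>n$ then the image of all of $\R^n$ under a Lipschitz map has Hausdorff dimension at most $n<l$ and hence $\LL^l$-measure zero. So no Lipschitz $f$ can witness the conclusion when $l\geq n$; one is forced to use a map that lies in $1\po AC$ without being Lipschitz.

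The paper's proof handles all $l\geq 1$ at once by working in an orthonormal frame $\mathbf{x}_1,\dots,\mathbf{x}_n$ with $\mathbf{x}_n$ parallel to $(1,\dots,1)$ and defining $f$ to depend only on the \emph{transverse} coordinates $t_1$ (and $t_2$ when $l>1$): namely $f=\varphi(t_1)$ for $l=1$ and $f=(p\circ\varphi(t_1),t_2)$ for $l>1$, where $\varphi$ is the Cantor function and $p:[0,1]\to\R^{l-1}$ is a space-filling curve. Since the endpoints of any $1$-regular interval differ only in the $t_n$-coordinate, $f$ is constant (or, in its last component, linear) along such intervals, so $f\in 1\po AC$ by the mechanism of Theorem~\ref{lipone}(a) even though $\varphi$ and $p\circ\varphi$ are far from Lipschitz. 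The null set $U=\{t_1\in C\}$ then maps onto $[0,1]$ (resp.\ $p([0,1])\times\R$), which has positive $\LL^l$-measure. Note also that the Remark following the theorem singles out $l\geq n$ as precisely the range in which the conclusion is stronger than the failure of Luzin~(N); your construction misses that case entirely. For $l<n$ your projection example is of course simpler than the paper's and perfectly valid, but it does not suffice for the theorem as stated.
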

\begin{remark}
Note that when $l\geq n$, any subset of $\R^{l}$ with positive $l$-dimensional Lebesgue measure, has positive $n$-dimensional Hausdorff measure. In fact, if $l>n$, the $n$-dimensional Hausdorff measure of such a set is necessarily infinite. Hence, Theorem~\ref{luzin} shows that functions in the class $1\po AC_{D}(\R^{n},\R^{l})$ may be severely expanding; when $l\geq n$, the conclusion of Theorem~\ref{luzin} is stronger than the assertion that $f$ does not have the Luzin (N) property.
\end{remark}

\begin{proof}[Proof of Theorem~\ref{luzin}]
Let $\varphi:\R\to[0,1]$ be an extension of the Cantor function (see \cite{Cantor84}, \cite{DMRV2006}) on $[0,1]$ such that $\varphi$ is constant on $\R\setminus[0,1]$. Let $C$ denote the standard ternary Cantor set. Recall that   $\varphi(C)=[0,1]$ and  $\varphi$ is constant on each connected component of $[0,1]\setminus C$. Hence, $\varphi$ is differentiable with derivative zero almost everywhere.

Let $p:[0,1]\to\R^{l-1}$ denote a space filling curve  with $\leb^{l-1}(p([0,1])) >0$, (see \cite{ACQ98}). Note that the function $p\circ \varphi:[0,1]\to\R^{l-1}$ is differentiable with derivative zero almost everywhere.

Let $\mathbf{x}_{1},\ldots,\mathbf{x}_{n}$ of $\R^{n}$ be an orthonormal basis  of $\R^{n}$ so that $\mathbf{x}_{n}=(1/\sqrt{n})(\mathbf{e}_{1}+\ldots+\mathbf{e}_{n})$. Define a set $U\subseteq\R^{n}$ by
\begin{equation*}
U=\left\{t_{1}\mathbf{x}_{1}+\ldots+t_{n}\mathbf{x}_{n}\mbox{ : }t_{1}\in C,t_{2}\ldots,t_{n}\in\R \right\}.
\end{equation*}
We note that $U$ has Lebesgue measure zero, since it is an isomorphic image of $C\times\R^{n-1}$. Let $f:\R^{n}\to\R^{l}$ be the function defined by 
\begin{equation*}
f(t_{1}\mathbf{x}_{1}+\ldots+t_{n}\mathbf{x}_{n})=\begin{cases}
\varphi(t_{1}) & \mbox{ if }l=1,\\
(p\circ\varphi(t_{1}),t_{2}) & \mbox{ if }l>1.
\end{cases} 
\end{equation*}
Then
\begin{equation*}
f(U)=\begin{cases}
[0,1] & \mbox{ if }l=1,\\
p([0,1])\times \R & \mbox{ if }l>1.
\end{cases}
\end{equation*}
Hence, $\leb^{l}(f(U))>0$, and $f$ does not have the Luzin (N) property. Further, note that $f$ is differentiable almost everywhere.

It only remains to verify that $f\in 1\po AC(\R^{n},\R^{l})$. One can check that each component of the function $f$ has the form given by the generalization of Theorem~\ref{lipone}, part (a) (for $\R^{n}$ rather than $\R^{2}$). By an adaptation of the proof of Theorem~\ref{lipone}, part (a), we get that $f\in 1\po AC(\R^{n},\R^{l})$. 
\end{proof}

The next theorem contains a positive result about properties of functions in $1\po AC_{H}$.

\begin{thm}\lb{diff11} Every function 
$
f\in  1\po AC_{H}^{(n)}(\R^{n},\R^{l})
$
is differentiable a.e. in the direction ${\bf e}_1+{\bf e}_2+\dots +{\bf e}_n$.
\end{thm}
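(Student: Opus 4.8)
The plan is to reduce the question of directional differentiability in the direction $\mathbf{e}_1+\dots+\mathbf{e}_n$ to a one-dimensional differentiability statement along each line parallel to this direction, and then to apply the classical fact that functions of bounded variation (in particular, absolutely continuous functions) on an interval of $\R$ are differentiable a.e. Concretely, write $\mathbf{v}=\mathbf{e}_1+\dots+\mathbf{e}_n$ and decompose $\R^n$ as the direct sum of $\R\mathbf{v}$ and the hyperplane $H=\mathbf{v}^\perp$; for fixed $\mathbf{w}\in H$ consider the slice $g_{\mathbf{w}}(s)\DEF f(\mathbf{w}+s\mathbf{v})$. The directional derivative $D_{\mathbf{v}}f(\mathbf{w}+s\mathbf{v})$ exists precisely when $g_{\mathbf{w}}'(s)$ exists, so by Fubini it suffices to show that for a.e. $\mathbf{w}\in H$ the function $g_{\mathbf{w}}$ is differentiable at a.e. $s\in\R$.

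The core of the argument is to show that, for a.e. $\mathbf{w}$, the slice $g_{\mathbf{w}}$ is locally absolutely continuous (or at least of locally bounded variation) on $\R$. This is where the $1$-$AC_H$ hypothesis enters. The key geometric observation is that intervals $[\mathbf{a},\mathbf{b}]$ whose edge in the $\mathbf{v}$-direction is long compared to the other edges are $1$-regular (in fact, as the "transverse" dimensions shrink to zero, the ratio $\LL^n([\mathbf{a},\mathbf{b}])/(\max_\nu|a_\nu-b_\nu|)^n\to 1$), and the $\la$-contracted interval $~^\la[\mathbf{a},\mathbf{b}]$ then has its $\mathbf{v}$-projection equal to a prescribed subinterval while its $H$-projection is a tiny cube around $\mathbf{w}$. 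Thus, given finitely many disjoint subintervals $\{[s_i,s_i']\}$ of a line through $\mathbf{w}$, one thickens each into a thin $1$-regular box $Q_i$ (with $H$-cross-section a small cube of side $\rho$, and $\mathbf{v}$-length $\approx(s_i'-s_i)/\la$) so that $~^\la Q_i$ captures the pair of near-endpoints; the boxes can be kept disjoint since the $[s_i,s_i']$ are disjoint and $\rho$ can be taken small. The $1$-$AC_H$ condition applied to $\{Q_i\}$ then bounds $\sum_i\osc^n(f, {}^\la Q_i)$, hence (letting $\rho\to 0$ and using a measurability/Fubini argument to fix a good $\mathbf{w}$ at which $f$ restricted to $~^\la Q_i$ does not oscillate much more than along the central segment) bounds $\sum_i|g_{\mathbf{w}}(s_i')-g_{\mathbf{w}}(s_i)|^n$ in terms of $\sum_i|s_i'-s_i|$. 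Since an $\ell^n$-type absolute-continuity estimate of this form forces $g_{\mathbf{w}}$ to be absolutely continuous on bounded intervals, Lebesgue's differentiation theorem for AC functions of one variable finishes the proof.

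The main obstacle is the interchange between the oscillation of $f$ on the thickened box $~^\la Q_i$ and the increment of $f$ along the central one-dimensional segment: the $1$-$AC_H$ definition controls $\osc^n(f,{}^\la Q_i)=\diam^n f({}^\la Q_i)$, not $|g_{\mathbf{w}}(s_i')-g_{\mathbf{w}}(s_i)|^n$ directly, and a priori $f$ could behave wildly in the transverse directions inside $~^\la Q_i$ even though its $\mathbf{v}$-slices are tame. This must be handled by a limiting/averaging argument: for each fixed collection of rational subintervals one lets the transverse thickness $\rho\to 0$ and argues that for a.e. $\mathbf{w}$ the lim inf of $\osc(f,{}^\la Q_i)$ is at least $|g_{\mathbf{w}}(s_i')-g_{\mathbf{w}}(s_i)|$, then takes a countable union over a dense family of subinterval systems to obtain a single full-measure set of good $\mathbf{w}$. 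Care is also needed to ensure the boxes $Q_i$ lie inside any prescribed open $\Om$ (here $\Om=\R^n$, so this is automatic) and remain pairwise disjoint; the disjointness is the reason one contracts by $\la<1$ rather than working with $1$-$AC$ directly, and it is precisely what makes the Hencl-type definition the natural hypothesis for this theorem.
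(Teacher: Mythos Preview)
Your high-level strategy---decompose $\R^n$ along $\mathbf{v}=\mathbf{e}_1+\dots+\mathbf{e}_n$ and reduce to one-dimensional differentiability of the slices via Fubini---is correct and matches the paper's. However, the implementation rests on two misreadings of the definitions, and these make the proposed argument break down.

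First, a $1$-regular interval $[\mathbf{a},\mathbf{b}]$ is an \emph{axis-aligned} box with $\prod_\nu(b_\nu-a_\nu)\ge(\max_\nu(b_\nu-a_\nu))^n$; this forces all side lengths to be equal, so $[\mathbf{a},\mathbf{b}]$ is a cube. There is no ``edge in the $\mathbf{v}$-direction'', and shrinking some side lengths sends the regularity ratio to $0$, not $1$. The relevant geometric fact is the opposite of what you wrote: the diagonal $\mathbf{b}-\mathbf{a}$ of a $1$-regular interval is automatically a positive multiple of $\mathbf{v}$, so the corners $\mathbf{a},\mathbf{b}$ (and those of $^\la[\mathbf{a},\mathbf{b}]$) already lie on a single line parallel to $\mathbf{v}$.

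Second, the $1\po AC_H$ condition (Definition~\ref{1AClambda}) controls $\sum_i|f(^\la[\mathbf{a}_i,\mathbf{b}_i])|^n$ with $|f([\mathbf{x},\mathbf{y}])|\DEF|f(\mathbf{y})-f(\mathbf{x})|$, \emph{not} $\sum_i\osc^n(f,{}^\la[\mathbf{a}_i,\mathbf{b}_i])$. Thus what is bounded is precisely the increment of $f$ between two specific points on the diagonal line, and your ``main obstacle'' (passing from an oscillation on a box to the increment along the central segment, with a limiting/averaging argument in $\rho$) simply does not arise.

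With these corrections in hand, the paper proceeds differently from your outline: rather than proving each slice $g_{\mathbf{w}}$ is locally absolutely continuous, it shows that $E=\{\mathbf{u}:\lip(f_{\mathbf{u}},0)=\infty\}$ has measure zero via a Vitali covering argument (the cover of $E$ consists of small cubes $[\mathbf{a},\mathbf{b}]\ni\mathbf{u}$ on which the diagonal difference quotient $|f(^{1/2}[\mathbf{a},\mathbf{b}])|\big/\|\mathbf{b}-\mathbf{a}\|$ is large), and then concludes by Fubini together with Stepanov's theorem. Your route through local absolute continuity of $g_{\mathbf{w}}$ would face a genuine disjointness issue even after the geometry is corrected: to realize the pair $\mathbf{w}+s_i\mathbf{v}$, $\mathbf{w}+s_i'\mathbf{v}$ as the corners of $^\la[\mathbf{a}_i,\mathbf{b}_i]$ one must take $[\mathbf{a}_i,\mathbf{b}_i]$ to be the cube of side $(s_i'-s_i)/\la>s_i'-s_i$, and such enlarged cubes need not be pairwise disjoint when the one-dimensional intervals $[s_i,s_i']$ are. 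The Vitali/Stepanov route sidesteps this.
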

\begin{proof}
Fix  $f\in1\po AC_{H}^{(n)}(\R^{n},\R^{l})$.  If $n=1$ then  $f=(f_{1},\ldots,f_{l})$ where each $f_{i}:\R\to\R$ is a function in $AC_{H}$. Thus each $f_{i}$ is differentiable almost everywhere, and the theorem follows.

Let $n>1$. Let $\mathbf{x}_{n}={\bf e}_1+{\bf e}_2+\dots +{\bf e}_n$ and choose vectors $\mathbf{x}_{1},\ldots,\mathbf{x}_{n-1}\in\R^{n}$ so that $\mathbf{x}_{1},\ldots,\mathbf{x}_{n}$ is an orthogonal basis. In what follows we will identify $\R^{n-1}$ with the $n-1$ dimensional subspace of $\R^n$ spanned by $\mathbf{x}_{1},\ldots,\mathbf{x}_{n-1}$ via the correspondence ${\bf~s}\leftrightarrow s_{1}\mathbf{x}_{1}+\ldots +s_{n-1}\mathbf{x}_{n-1}$. For $\mathbf{u}\in\R^{n}$, we define a function $f_{\mathbf{u}}:\R\to\R^{l}$ by $f_{\mathbf{u}}(t)=f(\mathbf{u}+t\mathbf{x}_{n})$. Moreover, given a point $t_{0}\in\R$ and a function $g:\R\to\R^{l}$ we define
\begin{equation}\lb{lip}
\lip(g,t_{0})=\limsup_{t\to t_{0}}\frac{|g(t)-g(t_{0})|}{|t-t_{0}|}.
\end{equation}
We will show that the set
\begin{equation*}
E:=\left\{\mathbf{u}\in\R^{n}\mbox{ : }\lip(f_{\mathbf{u}},0)=\infty\right\}
\end{equation*} 
has $n$-dimensional Lebesgue measure zero.

We claim that this suffices: Indeed, if $E$ has measure zero then, using Fubini's Theorem, we get that for almost every ${\bf s}\in\R^{n-1}$, $\lip(f_{{\bf s}+t\mathbf{x}_{n}},0)<\infty$ for almost every $t\in\R$. Observe that $\lip(f_{{\bf s}},t)=\lip(f_{{\bf s}+t\mathbf{x}_{n}},0)$ for all ${\bf s}\in\R^{n-1}$ and $t\in\R$. It follows that $\lip(f_{{\bf s}},t)<\infty$ for almost every $s\in\R^{n-1}$ and almost every $t\in\R$. Now, applying the Stepanov Theorem \cite{ZBZ2003}, we  conclude that for almost every ${\bf s}\in\R^{n-1}$, $f_{{\bf s}}$ is differentiable almost everywhere (in $\R$). Clearly $f$ is differentiable at ${\bf s}+t\mathbf{x}_{n}$ in the direction $\mathbf{x}_{n}$  if and only if $f_{{\bf s}}$ is differentiable at $t$. Hence $f$ is differentiable in the direction $\mathbf{x}_{n}$ almost everywhere.

We now prove that  $E$ has measure zero. Note that $E$ is  measurable.  
Fix $\e>0$ and choose $\delta>0$ such that
\begin{equation}\lb{eq:1accond}
\sum_{k=1}^{N}\leb^{n}([{\bf a}_{k},{\bf b}_{k}])<\delta\Rightarrow\sum_{k=1}^{N}\Big|f\Big(\frac{{\bf a}_{k}+3{\bf b}_{k}}{4}\Big)-f\Big(\frac{3{\bf a}_{k}+{\bf b}_{k}}{4}\Big)\Big|^{n}<\e,
\end{equation}
whenever $\{[a_{k},b_{k}]\}_{k=1}^{N}$ is a finite collection of pairwise disjoint, $1$-regular intervals in $\R^{n}$. Let $\{A_{m}\}_{m=1}^{\infty}$ be a countable collection of closed intervals with pairwise disjoint, non-empty interiors such that $\R^{n}=\bigcup_{m=1}^{\infty}A_{m}$ and $\leb^{n}(A_{m})<\delta$ for all $m$. For each $m\in\N$, we define a collection of intervals $\mathcal{V}_{m}$ by
\begin{equation}\label{eq:Vm2}
\mathcal{V}_{m}=\Big\{[{\bf a},{\bf b}]\subseteq A_{m}\mbox{ : }\frac{\Big|f\big(\frac{{\bf a}+3{\bf b}}{4}\big)-f\big(\frac{3{\bf a}+{\bf b}}{4}\big)\Big|}{\frac{\|{\bf b}-{\bf a}\|}{2}}\geq m\Big\}.
\end{equation}
Note that $\mathcal{V}_{m}$ is a Vitali cover of $E\cap \inter(A_{m})$. Hence, by the Vitali Covering Theorem, we can find a collection $\{I_{k}^{(m)}=[{\bf a}_{k}^{(m)},{\bf b}_{k}^{(m)}]\}_{k=1}^{\infty}$ of pairwise disjoint intervals from $\mathcal{V}_{m}$ such that
\begin{equation*}
\leb^{n}\Big((E\cap\inter(A_{m}))\setminus\bigcup_{k=1}^{\infty}I_{k}^{(m)}\Big)=0.
\end{equation*}

Choose an integer $K_{m}\geq 1$ so that $\sum_{k=1}^{K_{m}}\leb^{n}(I_{k}^{(m)})>\leb^{n}(E\cap A_{m})-\frac{\e}{2^{m}}$.
Since the intervals $I_{k}^{(m)}$ are pairwise disjoint and contained in $A_{m}$ we have that $\sum_{k=1}^{K_{m}}\leb^{n}(I_{k}^{(m)})<\leb^{n}(A_{m})<\delta$. Thus, using \eqref{eq:1accond} and \eqref{eq:Vm2}, we get
\begin{align*}
\e&>\sum_{k=1}^{K_{m}}\Big|f\Big(\frac{\mathbf{a}_{k}^{(m)}+3\mathbf{b}_{k}^{(m)}}{4}\Big)-f\Big(\frac{3\mathbf{a}_{k}^{(m)}+\mathbf{ b}_{k}^{(m)}}{4}\Big)\Big|^{n}\\
&\geq 2^{-n}m^{n}\sum_{k=1}^{K_{m}}\big\|{\bf b}_{k}^{(m)}-{\bf a}_{k}^{(m)}\big\|^{n}\\
&\geq 2^{-n}m^{n}\sum_{k=1}^{K_{m}}\leb^{n}(I_{k}^{(m)}).
\end{align*}
Hence, 
\begin{equation*}\label{eq:sumbound}
\sum_{k=1}^{K_{m}}\leb^{n}(I_{k}^{(m)}\leq\frac{2^{n}\e}{m^{n}}.
\end{equation*}
We now deduce that
\begin{equation*}\label{eq:sumsquares}
\sum_{m=1}^{\infty}\sum_{k=1}^{K_{m}}\leb^{n}(I_{k}^{(m)})\leq 2^{n}\e\sum_{m=1}^{\infty}\frac{1}{m^{n}}<2^{n+1}\e ,
\end{equation*}
whilst
\begin{equation*}\label{eq:covermost}
\sum_{m=1}^{\infty}\sum_{k=1}^{K_{m}}\leb^{n}(I_{k}^{m})\geq\sum_{m=1}^{\infty}(\leb^{n}(E\cap A_{m})-\frac{\e}{2^{m}})\geq\leb^{n}(E)-\e.
\end{equation*}
Thus $\leb^{n}(E)\leq (2^{n+1}+1)\e$. Since $\e>0$ was arbitrary,  $\leb^{n}(E)=0$.
\end{proof}

In view of the presented above pathological examples of functions in $1\po AC$, it makes sense to consider the class $1\po AC_{\rm WDN}$ which  consits of all functions in $1\po AC$ which are in the Sobolev space $W^{1,n}_{loc}$,  are differentiable a.e. and satisfy the Luzin (N) property.

Our final result  shows where the class $1\po AC_{\rm WDN}$ fits in the hierarchy of previously studied classes.

\begin{thm}\lb{thmcontainments}
The following holds
\begin{equation}\label{containments}
\begin{split}
 &\mathcal{Q}\po\ACq \subsetneq \al\po\ACq= 1\po\ACq_{\rm WDN}\cap \ACq_H\subsetneq 1\po\ACq_{\rm WDN}\\& \subsetneq 1\po\ACq_{\rm WDN}\cup \ACq_H \subsetneq \linspan( 1\po\ACq_{\rm WDN}\cup \ACq_H)\subseteq1\po\ACq_{\rm HWDN},
\end{split}
\end{equation}
and 
\begin{equation}\label{bacnotone}
1\po\ACq_{\rm WDN}\setminus \mathcal{B}\po\ACq \ne\emptyset,  \ \ \ \text{ and }\ \ \ \mathcal{B}\po\ACq \setminus 1\po\ACq_{\rm WDN}\ne\emptyset.
\end{equation}
where $1\po\ACq_{\rm HWDN}$ denotes the set of functions in $1\po\ACq_H$ which are in the Sobolev space $W^{1,2}_{loc}$,  are differentiable a.e. and satisfy the Luzin (N) property.
\end{thm}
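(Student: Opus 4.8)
The plan is to verify \eqref{containments} one link at a time, handling the membership relations structurally and building a few explicit separating functions for the strictness claims and for \eqref{bacnotone}. First, $\mathcal{Q}\po\ACq\subsetneq\al\po\ACq$ is Bongiorno's inequality recalled after Definition~\ref{alphaac}. The equality $\al\po\ACq=1\po\ACq_{\rm WDN}\cap \ACq_H$ comes from combining Theorem~\ref{1capH} ($\al\po\ACq=1\po\ACq\cap \ACq_H$) with Hencl's results ($\ACq_H\subseteq W^{1,n}_{loc}$, differentiable a.e., Luzin (N)): membership in $\ACq_H$ already forces the W, D, N properties, so $1\po\ACq\cap \ACq_H=1\po\ACq_{\rm WDN}\cap \ACq_H$. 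I record once and for all that $1\po\ACq$, $1\po\ACq_H$, $\ACq_H$ and $W^{1,2}_{loc}$ are vector spaces --- for the ``$\ACq$''-type classes via the convexity bound $|a+b|^n\le 2^{n-1}(|a|^n+|b|^n)$ and the usual splitting of $\de$ --- so $\linspan(1\po\ACq_{\rm WDN}\cup \ACq_H)=1\po\ACq_{\rm WDN}+\ACq_H$ and the strictness proofs can use the cancellation trick: if $u\in A$, $w\in B$, $u+w\in A$ and $A$ is a vector space, then $w\in A$.

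For $1\po\ACq_{\rm WDN}\cap \ACq_H\subsetneq 1\po\ACq_{\rm WDN}$ I exhibit $v\in 1\po\ACq_{\rm WDN}\setminus \ACq_H$: take $v(x,y)=\psi\big(\tfrac{y-x}{2}\big)$ with $\psi\in W^{1,2}_{loc}(\R)$ equal to $|s|^{2/3}$ near $0$ (so $\tfrac1{|K|}\int_K|\psi'|\to\infty$ as $K\downarrow 0$). Since $v$ is constant on lines of slope $1$, the two extreme corners of any axis-aligned square (any $1$-regular interval) carry equal values, so every sum in the definition of $1\po\ACq$ vanishes and $v\in 1\po\ACq(\R^2,\R)$; a change of variables gives $v\in W^{1,2}_{loc}(\R^2)$, $v$ is differentiable off $\{y=x\}$, and Luzin (N) is automatic for an $\R^2\to\R$ map, so $v\in 1\po\ACq_{\rm WDN}$. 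But $v\notin \ACq_H$: centring $N$ disjoint balls of radius $r$ along the whole line $\{y=x\}$ gives $\osc(v,\la B_i)\sim r^{2/3}$ on each, so with $N\sim\de/r^2$ one has $\sum\leb^2(B_i)<\de$ while $\sum\osc^2(v,\la B_i)\sim\de\,r^{-2/3}\to\infty$. For $1\po\ACq_{\rm WDN}\subsetneq 1\po\ACq_{\rm WDN}\cup \ACq_H$ it suffices to find $w\in \ACq_H\setminus 1\po\ACq$; any $w\in \ACq_H\setminus\al\po\ACq$ (such $w$ exists by Bongiorno) works, since $w\in 1\po\ACq$ would give $w\in 1\po\ACq\cap \ACq_H=\al\po\ACq$, and such $w$ is automatically WDN. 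Choosing $w$ inside $\mathcal{B}\po\ACq$ --- a small adjustment of Cs\"ornyei's function \cite[Theorem~2]{Csornyei2000} whose failure of $1\po\ACq$-membership is certified by Theorem~\ref{lipone}(b) --- simultaneously proves the second half of \eqref{bacnotone}; the first half follows from $\mathcal{Q}\po\ACq\subseteq 1\po\ACq_{\rm WDN}$ (cubes are $1$-regular and $\mathcal{Q}\po\ACq\subseteq \ACq_H$) together with Cs\"ornyei's example in $\mathcal{Q}\po\ACq\setminus\mathcal{B}\po\ACq$. Finally, with $v\in 1\po\ACq_{\rm WDN}\setminus \ACq_H$ and $w\in \ACq_H\setminus 1\po\ACq$ as above, $v+w\notin \ACq_H$ (else $v=(v+w)-w\in \ACq_H$) and $v+w\notin 1\po\ACq$ (else $w=(v+w)-v\in 1\po\ACq$), giving $1\po\ACq_{\rm WDN}\cup \ACq_H\subsetneq\linspan(1\po\ACq_{\rm WDN}\cup \ACq_H)$.

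It remains to prove $\linspan(1\po\ACq_{\rm WDN}\cup \ACq_H)\subseteq 1\po\ACq_{\rm HWDN}$, i.e.\ that $h=f+g$ with $f\in 1\po\ACq_{\rm WDN}$, $g\in \ACq_H$ satisfies all four defining conditions. That $h\in W^{1,2}_{loc}$ and $h$ is differentiable a.e.\ is immediate; that $h\in 1\po\ACq_H$ follows from $1\po\ACq\subseteq 1\po\ACq_H$ (the concentric $\la$-contractions of disjoint $1$-regular intervals are again disjoint $1$-regular with smaller total measure, and $|f(^\la[\mathbf a_i,\mathbf b_i])|$ is exactly a corner-difference over that family), from $\ACq_H\subseteq 1\po\ACq_H$ (the corollary following Theorem~\ref{lambdaeq}), and from $1\po\ACq_H$ being a vector space. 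The genuinely delicate point --- and the step I expect to be the main obstacle --- is that $h$ keeps the Luzin (N) property; ``Luzin (N) $+$ Lipschitz $=$ Luzin (N)'' is \emph{false} in general (in $\R^2$ the map $(s,t)\mapsto(\varphi(s),0)$, $\varphi$ the Cantor function, has $\mathcal H^2$-null image yet, added to $(s,t)\mapsto(0,t)$, sends $C\times[0,1]$ onto $[0,1]^2$), and the offending first map is not in $W^{1,n}_{loc}$, so the Sobolev regularity of $f$ and $g$ must be used essentially. The route I would take: (a) for an a.e.-differentiable $u$, Luzin (N) is equivalent to $\mathcal H^n(u(Z_u))=0$, where $Z_u$ is the (null) non-differentiability set, because a map differentiable at every point of a null set maps it to an $\mathcal H^n$-null set (decompose by the size of the derivative, extend by McShane, and use that Lipschitz images of null sets are null); (b) since $Z_h\subseteq Z_f\cup Z_g$ it suffices to bound $\mathcal H^n(h(Z_f))$ and $\mathcal H^n(h(Z_g))$; (c) here one invokes the fine theory of $W^{1,n}_{loc}$ maps --- the area formula for Sobolev maps and the characterisation of the Luzin (N) property in terms of the (distributional) Jacobian governing the image measure --- to conclude that the non-differentiability contributions of $f$ and $g$ remain $\mathcal H^n$-null after summation. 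Step (c) is where the real work lies; the rest is bookkeeping with the convexity estimate and the cancellation trick.
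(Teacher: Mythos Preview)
Your proposed separating function $v(x,y)=\psi\bigl(\tfrac{y-x}{2}\bigr)$ with $\psi(s)=|s|^{2/3}$ near $0$ does \emph{not} show $1\po\ACq_{\rm WDN}\setminus \ACq_H\ne\emptyset$ in the setting of the theorem. The flaw is in the line ``with $N\sim\de/r^{2}$ one has $\sum\leb^2(B_i)<\de$'': to place $N$ disjoint balls of radius $r$ with centres on the diagonal you need diagonal length at least $2rN\sim 2\de/r\to\infty$, so your argument only runs on an unbounded domain. On any bounded $\Om$ (which is the natural setting here, since $\mathcal Q\po\ACq$ and $\mathcal B\po\ACq$ are defined on $C_0(\R^n,\R^l)$), your $v$ is actually \emph{in} $AC_H$. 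Indeed, for any one--variable function $v(x,y)=\psi(s)$, $s=(y-x)/2$, with $\psi\in W^{1,2}_{loc}$, writing $J_i$ for the $s$--projection of $\la B_i$ one has by Cauchy--Schwarz
\[
\osc(v,\la B_i)^2\le\Bigl(\int_{J_i}|\psi'|\Bigr)^2\le |J_i|\int_{J_i}|\psi'|^2\le C_\la\int_{B_i}|\nabla v|^2,
\]
so $\sum_i\osc^2(v,\la B_i)\le C_\la\int_{\cup B_i}|\nabla v|^2\to 0$ as $\de\to0$ by absolute continuity of the integral, since $|\nabla v|^2\in L^1(\Om)$. Thus no one--variable $W^{1,2}$ example can separate $1\po\ACq_{\rm WDN}$ from $AC_H$; this is precisely why the paper builds a genuinely two--dimensional, compactly supported function via the elaborate Cs\"ornyei--Bongiorno type construction (the long Claims~1--4). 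That construction is the real content of the proof, and your proposal bypasses it with an example that does not work.

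Two smaller points. First, your appeal to Theorem~\ref{lipone}(b) to certify that (an adjustment of) Cs\"ornyei's function fails $1\po\ACq$ is misplaced: that function is not of the product form $h(s)g(t)$, so Theorem~\ref{lipone}(b) says nothing about it; one must argue directly with the $1$--regular corner intervals, as the paper does. Second, your worry about Luzin~(N) for the last inclusion is unnecessary in the paper's setting ($n=2$, $l=1$): the Luzin~(N) condition is vacuous for maps $\R^2\to\R$ (as you yourself used for $v$), so $\linspan(1\po\ACq_{\rm WDN}\cup\ACq_H)\subseteq 1\po\ACq_{\rm HWDN}$ really is ``clear''. Your alternative route to the first half of \eqref{bacnotone} via $\mathcal Q\po\ACq\subseteq 1\po\ACq_{\rm WDN}$ and Cs\"ornyei's $\mathcal Q\po\ACq\setminus\mathcal B\po\ACq$ example is, however, a valid and pleasant shortcut compared with the paper's deduction from \eqref{1acnotach}.
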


\begin{rem} \lb{open}
Bongiorno \cite{Bongiorno2009a} introduced another class of absolute continuity denoted $AC_\La^n(\Om,\R^l)$, or simply $AC_\La$, so that $AC_H\varsubsetneq AC_\La$ and all functions in $AC_\La$ are differentiable a.e. and satisfy the Luzin (N) property, but $AC^n_\La(\Om,\R^l)\not\subset W^{1,n}_{loc}(\Om,\R^l)$. 
It would be interesting to determine 
what are the  classes  $1\po AC \cap AC_\La$,  $1\po AC_H \cap AC_\La$ and  $1\po AC_{\rm HWDN} \cap AC_\La$.

It also would be interesting to determine what are the relations between $AC_\La\cap W^{1,n}_{loc}$, 
$1\po\ACq_{\rm HWDN}$ and the  linear span of $1\po\ACq_{\rm WDN}\cup \ACq_H$.
\end{rem}

\begin{proof}[Proof of Theorem~\ref{thmcontainments}]
The first containment of \eqref{containments} is due to Bongiorno \cite{Bongiorno2005}. The next equality follows from
Theorem~\ref{1capH}, since all functions in $\ACq_H$ are in the Sobolev space $W^{1,2}_{loc}$,  are differentiable a.e. and satisfy the Luzin (N) property.
All following containments are clear. The proof that
\begin{equation}\lb{1acnotach}
1\po\ACq_{\rm WDN}\setminus\ACq_H\ne\emptyset
\end{equation} 
is technical and we postpone it till the end.

The Bongiorno's example \cite[Example~4.(1)]{Bongiorno2005}  shows that 
\begin{equation}\lb{1achnot1ac}
\ACq_{H} \setminus 1\po AC_{\rm WDN} \ne\emptyset.
\end{equation} 

Since both $\ACq_{H}$ and $1\po AC_{\rm WDN} $ are linearly closed it follows from \eqref{1achnot1ac} that $\linspan( 1\po\ACq_{\rm WDN}\cup \ACq_H)\setminus (1\po\ACq_{\rm WDN}\cup \ACq_H)\ne\emptyset$.

The first part of  \eqref{bacnotone} follows from \eqref{1acnotach} since $\mathcal{B}\po\ACq\subset AC_H$.

The other part of \eqref{bacnotone} follows from   a small adjustment of the function constructed by  Cs{\"o}rnyei in \cite[Theorem~2]{Csornyei2000}.

Indeed, let $f$ be the function  defined in \cite[Theorem~2]{Csornyei2000}. We rotate $f$ clockwise by 90$^\circ$ to obtain the function $g$. Cs{\"o}rnyei showed that  $f\notin Q\po\ACq^{2}(\Om,\R)$. By a similar argument, using the same notation, since the right-upper corner of each $Q_{mk}=[\mathbf{a}_{mk},\mathbf{b}_{mk}]$ is a 1-regular interval for $m\in\mathbb{N}$ and $k=1,2,\dots,r_m$, and since the collection $\{Q_{mk}\}$ forms a pairwise disjoint system of 1-regular intervals,  we have for the new function $g$
 $$|g(\mathbf{b}_{mk})-g(\mathbf{a}_{mk})|=\om_m. $$ 
Thus, 
$$
\sum\limits_{m=1}^{\infty} \sum\limits_{k=1}^{r_m}|g(\mathbf{b}_{mk})-g(\mathbf{a}_{mk})|^2=\sum\limits_{m=1}^{\infty} \sum\limits_{k=1}^{r_m} \om^2_m = \sum\limits_{m=1}^{\infty} r_m\om_m^2= \sum\limits_{m=1}^{\infty} \frac{1}{4m}=\infty.
$$
Therefore, $g\notin 1\po\ACq^{2}(\Om,\R^l)$.
However, by the same argument as in  \cite[Theorem~2]{Csornyei2000}, $g$ is in $\mathcal{B}\po\ACq^{2}(\Om,\R)$.

We now prove \eqref{1acnotach}.
 The construction is an adjustment of  \cite[Theorem~2]{Csornyei2000} and  \cite[Example~4.(2)]{Bongiorno2005}. Since the construction is very technical we provide all details for the convenience of the reader.

We define $f$ as the sum of an absolutely convergent series of non-negative continuous functions $f_m$ so that the support of each $f_m$ is covered by the union of 
$$
r_m=4^{m-1}(m-1)!m!
$$
pairwise disjoint squares
$$
Q_{m1},Q_{m2},\dots,Q_{mr_m},
$$
with
\begin{equation} \lb{sizeQmk}
\LL^2\left(\bigcup_{k=1}^{r_{m+1}} Q_{(m+1)k}\right)<\frac18 \LL^2\left(\bigcup_{k=1}^{r_{m}} Q_{mk}\right)<\left(\frac18\right)^{m}\LL^2(Q_{11}),
\end{equation}
and such that 
$$
\max{f_m}=\om_m\DEF\frac{1}{2^mm!}.
$$

The square $Q_{11}$ is arbitrarily chosen in $\Om$. Assume that, for a given $m$, the functions $f_1,f_2,\dots,f_{m-1}$ and the squares $Q_{hk},1\leq h\leq m,1\leq k \leq r_m$, have been defined. We define $f_m$ and the squares $\mathcal{Q}_{(m+1)j}$, for $j=1,\dots,r_{m+1}$ as follows: for a fixed $k\in {1,\dots,r_m}$, we put a horizontal and a vertical line through the midpoint $O=(o_1,o_2)$ of the square $Q_{mk}$.  Denote by $2d_1$ the length of the side of $Q_{mk}$ and by  
\begin{equation*}
\begin{split}
A_1&=(o_1+d_1,o_2+d_1),\  A_2=(o_1+d_1,o_2-d_1),\\
A_3&=(o_1-d_1,o_2-d_1), \ A_4=(o_1-d_1,o_2+d_1),
\end{split}
\end{equation*}
the verices of the square $Q_{mk}$.

Let  $d=\frac12 d_1$. We construct a smaller square $A_1B_1C_1D_1$ in the upper right corner of the square $A_1A_2A_3A_4$, where 
\begin{equation}\lb{defupper}
B_1=(o_1+d_1,o_2+d),C_1=(o_1+d,o_2+d),D_1=(o_1+d_1,o_2+d).
\end{equation}
The length of side $A_1B_1=d_1-d=d$. Note that the interval $T_{mk}=[O,B_1]\subset Q_{mk}$ is $\frac12$-regular  since 
\begin{equation}\lb{12regular}
\frac{\LL^2(T_{mk})}{d_1^2}=\frac{\LL^2[O,B_1]}{d_1^2}=\frac{d\cdot d_1}{d_1^2}=\frac12. 
\end{equation}

Similarly we construct a square $A_3B_3C_3D_3$ in the lower left corner of the square $A_1A_2A_3A_4$, where 
$$B_3=(o_1-d_1,o_2-d),C_3=(o_1-d,o_2-d),D_3=(o_1-d,o_2-d_1).$$

Look at the square $A_1B_1C_1D_1$ with the side length equal to $d$, cf. Figure~\ref{crazyfig}. For every interval 
\begin{equation*}
[a_i,b_i]=\begin{cases}
[\frac{d}{2^i},\frac{d}{2^{i-1}}], i=1,2,...,m-1,\\
[0,\frac{d}{2^{m-1}}], i=m ,
\end{cases}
\end{equation*}
we put $2(m+1)$ small disjoint squares $	Q_{(m+1)j}$ inside each strip 
$$
M_i=\left\{(x,y)\in \R^2,\rho (x,y)\in \Big[\frac{2a_i+b_i}{3},\frac{a_i+2b_i}{3}\Big] \cap \square A_1B_1C_1D_1\right\},
$$
where $\rho (x,y) = |x-o_1-d|+|y-o_2-d|$ (as shaded in Figure~\ref{crazyfig});
and we also put $2(m+1)$ small disjoint squares $Q_{(m+1)j}$ inside each strip
$$
M_i'=\left\{(x,y)\in \R^2,\rho' (x,y)\in \Big[\frac{2a_i+b_i}{3},\frac{a_i+2b_i}{3}\Big] \cap \square A_3B_3C_3D_3\right\},
$$
where $\rho' (x,y) = |x-o_1+d|+|y-o_2+d|$.

Thus all squares $Q_{(m+1)j}$ are contained in the union of of squares $A_1B_1C_1D_1$ and $A_3B_3C_3D_3$, whose union has measure equal to $\frac18$ of the measure of square $Q_{mk}$, so that \eqref{sizeQmk} is satisfied.

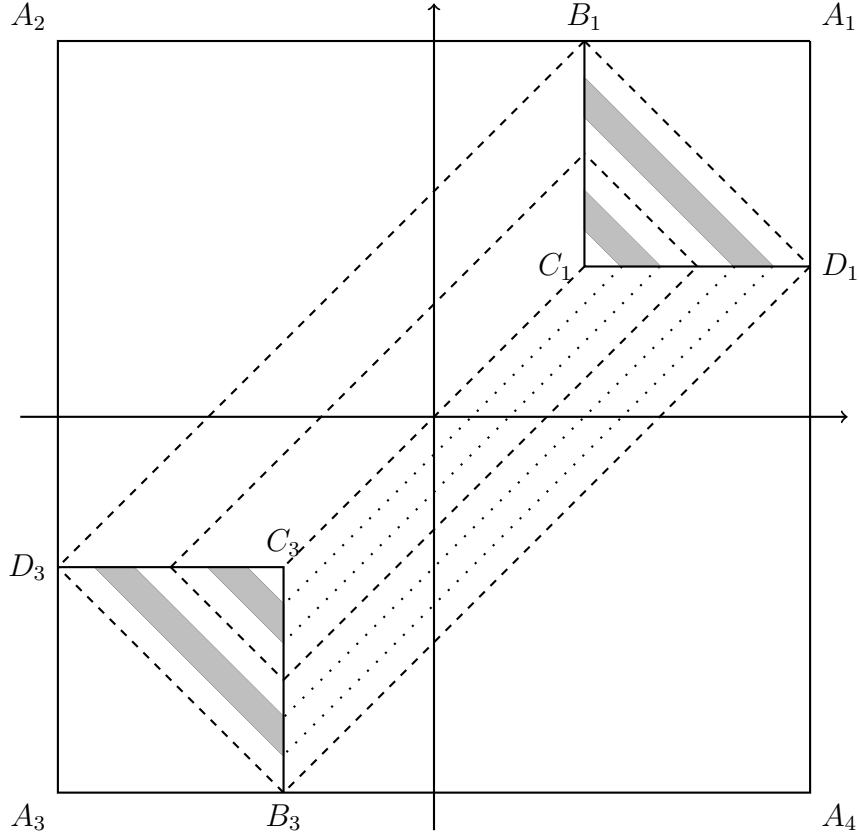
\begin{figure}
\begin{tikzpicture}[thick]
\draw [->](-5.5,0)--(5.5,0);
\draw [->](0,-5.5)--(0,5.5);
\coordinate [label=45:$A_1$] (A1) at (5,5);
\coordinate [label=135:$A_2$] (A2) at (-5,5);
\coordinate [label=-135:$A_3$] (A3) at (-5,-5);
\coordinate [label=-45:$A_4$] (A4) at (5,-5);
\coordinate [label=180:$C_1$] (C1) at (2,2);
\coordinate [label=90:$C_3$] (C3) at (-2,-2);
\coordinate [label=90:$B_1$] (B1) at (2,5);
\coordinate [label=-90:$B_3$] (B3) at (-2,-5);
\coordinate [label=0:$D_1$] (D1) at (5,2);
\coordinate [label=180:$D_3$] (D3) at (-5,-2);
\draw [dashed] (3.5,2)--(2,3.5);
\draw [dashed] (-3.5,-2)--(-2,-3.5);
\draw [dashed] (B1)--(D1);
\draw [dashed] (B3)--(D3);
\draw [dashed] (C1)--(C3);
\draw [dashed] (B1)--(D3);
\draw [dashed] (D1)--(B3);
\draw [dashed] (-2,-3.5)--(3.5,2);
\draw [dashed] (-3.5,-2)--(2,3.5);
\draw [loosely dotted] (2.5,2)--(-2,-2.5);
\draw [loosely dotted] (3,2)--(-2,-3);
\draw [loosely dotted] (4,2)--(-2,-4);
\draw [loosely dotted] (4.5,2)--(-2,-4.5);
\draw  (2.5,2)--(2,2.5);
\draw  (-2.5,-2)--(-2,-2.5);
\draw  (3,2)--(2,3);
\draw  (-3,-2)--(-2,-3);
\draw  (4,2)--(2,4);
\draw  (-4,-2)--(-2,-4);
\draw  (4.5,2)--(2,4.5);
\draw  (-4.5,-2)--(-2,-4.5);
\filldraw[lightgray] (2.5,2)--(2,2.5)--(2,3)--(3,2)--(2.5,2);
\filldraw[lightgray] (4,2)--(2,4)--(2,4.5)--(4.5,2)--(4,2);
\filldraw[lightgray] (-2.5,-2)--(-2,-2.5)--(-2,-3)--(-3,-2)--(-2.5,-2);
\filldraw[lightgray] (-4,-2)--(-2,-4)--(-2,-4.5)--(-4.5,-2)--(-4,-2);
\draw (A1)--(A2)--(A3)--(A4)--(A1);
\draw (B1)--(C1)--(D1);
\draw (B3)--(C3)--(D3);
\end{tikzpicture}
\caption{$Q_{mk}$ when $m=2$}
\label{crazyfig}
\end{figure}

 We require that the distribution of the squares inside the strips is the following (we will describe the situation for the strip $M_i$, and the strip $M_i'$ will be symmetric):

$\bf{Case\ 1}$: if $m$ is even, then we put
\begin{itemize}
\item[\textcircled{1}] $m/2+1$ squares $Q_{(m+1)j}$ inside each of the following strips:
\begin{align*} 
&M_i \cap \left\{(x,y):x > o_1+d, 0 < y-o_2-d< \frac{d}{2^{i+2}}\right\}; \\
&M_i \cap \left\{(x,y):0 < x-o_1-d< \frac{d}{2^{i+2}}, y > o_2+d\right\}; 
\end{align*}

\item[\textcircled{2}] $m/2$ squares $Q_{(m+1)j}$ into the interval $[\mathbf{p}_1,\mathbf{p}_2]$ with endpoints $\mathbf{p}_1$,$\mathbf{p}_2$ on the line $y=o_2+d+(x-o_1-d)/2$ and so that $\rho(\mathbf{p}_1)=\frac{4}{3}\cdot\frac{d}{2^i}, \rho(\mathbf{p}_2)=\frac{5}{3}\cdot\frac{d}{2^i}$;

\item[\textcircled{3}] $m/2$ squares $Q_{(m+1)j}$ into the interval $[\mathbf{q}_1,\mathbf{q}_2]$ with endpoints $\mathbf{q}_1$,$\mathbf{q}_2$ on the line $y=o_2+d+2(x-o_1-d)$ and so that $\rho(\mathbf{q}_1)=\frac{4}{3}\cdot\frac{d}{2^i}, \rho(\mathbf{q}_2)=\frac{5}{3}\cdot\frac{d}{2^i}$.
\end{itemize}

$\bf{Case\ 2}$: if $m$ is odd, then we put

\begin{itemize}
\item[\textcircled{4}] $(m+1)/2$ squares $Q_{(m+1)j}$ inside each of the strips in \textcircled{1};

\item[\textcircled{5}] $(m+1)/2$ squares $Q_{(m+1)j}$ into the interval $[\mathbf{p}_1,\mathbf{p}_2]$ with endpoints $\mathbf{p}_1$,$\mathbf{p}_2$ on the line $y=o_2+d+(x-o_1-d)/2$ and so that $\rho(\mathbf{p}_1)=\frac{4}{3}\cdot\frac{d}{2^i}, \rho(\mathbf{p}_2)=\frac{5}{3}\cdot\frac{d}{2^i}$;

\item[\textcircled{6}] $(m+1)/2$ squares $Q_{(m+1)j}$ into the interval $[\mathbf{q}_1,\mathbf{q}_2]$ with endpoints $\mathbf{q}_1$,$\mathbf{q}_2$ on the line $y=o_2+d+2(x-o_1-d)$ and so that $\rho(\mathbf{q}_1)=\frac{4}{3}\cdot\frac{d}{2^i}, \rho(\mathbf{q}_2)=\frac{5}{3}\cdot\frac{d}{2^i}$.
\end{itemize}

On the square $A_1B_1C_1D_1\subset Q_{mk}$, we define the function $f_m$ by $f_m(\mathbf{x})=\tilde{f}_m(\rho(\mathbf{x}))$, where
\begin{equation*}
\tilde{f}_m(\rho)=
\begin{cases}
0, & {\text {if } } \rho \geq d;\\
\frac{i\om_m}m , & {\text {if } }  \rho = d/2^i, i=1,2,\dots,m-1;\\
\om_m ,& {\text {if } }  \rho = 0;\\
\end{cases}
\end{equation*}
Moreover, on the intervals $[a_i,b_i]$, $i=1,2,...,m$, we define $\tilde{f}_m$ by 
\begin{equation*}
\tilde{f}_m(\rho)=
\begin{cases}
\frac{(\tilde{f}_m(a_i)+\tilde{f}_m(b_i))}2,& {\text {if } } \rho \in [\frac{2a_i+b_i}{3},\frac{a_i+2b_i}{3}]; \\
\text{linear, }& {\text {if } } \rho \in [a_i,\frac{2a_i+b_i}{3}] \cup  [\frac{a_i+2b_i}{3},b_i].
\end{cases}
\end{equation*}
We perform the same construction in the square $A_3B_3C_3D_3$.

On the polygon $B_1C_1D_1B_3C_3D_3$ the function $f_m$ is constant along the lines with slope 1. Outside the polygon $B_1A_1D_1B_3A_3D_3$ the  function $f_m$ is  0.
We define the function $$f=\sum\limits_{m=1}^\infty f_m.$$

For every $x\in \Om$, there exists the $m_x\in \mathbb{N}$ such that for all $m\ge m_x$, $f_m(x)=0$. Thus $f$ is well defined.

\vspace{2mm}

{\bf Claim 1:} $f$ is differentiable almost everywhere. 

\vspace{2mm}

By  the Rademacher-Stepanov theorem (see e.g. \cite[Theorem 3.1.9]{Federer1969}, a short proof in \cite{Maly1999b}), it is enough to prove that $\lip(f,x)<\infty$ for a.e. $x\in \Om$, where $\lip(f,x)$ was defined in \eqref{lip}.

For every $x\in \Om$, there exists the smallest $m_x\in \mathbb{N}$ such that for all $m\ge m_x$, $f_m(x)=0$ and \buo  $m_x>1$. Moreover for every $x$  there exists a neighborhood $B(x,r)$ such that for any $y\in B(x,r)$ and any $m\ne m_x$, we have $f_m(x)=f_m(y)$. Thus $\lip(f,x)=\lip(f_{m_x},x)$. Since functions $f_m$ are Lipschitz for every $m$, we conclude that $\lip(f_{m_x},x)$, and thus also
$\lip(f,x)$, is finite. 

\vspace{2mm}

{\bf Claim 2:} $f \in W^{1,2}(\R^2)$ and $f$ satisfies the Luzin (N) property.

\vspace{2mm}

Since each function $f_m$ is Lipschitz,
$f_m\in W^{1,2}(\R^2)$,  and thus $f$ as a weak limit of finite sums of $f_m$'s also
belongs to $ W^{1,2}(\R^2)$.

Similarly, each $f_m$ satisfies the Luzin (N) property, and thus $f$ as a countable sum of $f_m$'s also satisfies the Luzin (N) property.


\vspace{2mm}

{\bf Claim 3:} $f \notin \frac12\po\ACq^{2}(\Om,\R)$.

\vspace{2mm}

By \eqref{12regular}  for each $m\in\N$, and $k=1,2,\dots,r_m$, the disjoint intervals $T_{mk} \subset Q_{mk}$ are $\frac12$-regular, and by \eqref{sizeQmk}, for every $\de>0$, there exists $m_0\in \N$ so that
$$\LL^2\left(\bigcup_{m=m_0}^\infty\bigcup_{k=1}^{r_m} T_{mk}\right)<\frac1{7\cdot8^{m_0-2}}\LL^2(Q_{11})<\de.$$

Since, for each $m\in\N$, and $k=1,2,\dots,r_m$,  $|f(T_{mk})|=\om_m $, we have
\begin{align*}
\sum\limits_{m=m_0}^{\infty}\sum\limits_{k=1}^{r_m}|f(T_{mk})|^2
=\sum\limits_{m=m_0}^{\infty}\sum\limits_{k=1}^{r_m} \om^2_m = \sum\limits_{m=m_0}^{\infty}r_m\om_m^2
= \sum\limits_{m=m_0}^{\infty} \frac{1}{4m}=\infty.
\end{align*}
Therefore, $f\notin \frac12\po\ACq^{2}(\Om,\R)$.

\vspace{2mm}

{\bf Claim 4:} $f \in 1\po\ACq^{2}(\Om,\R)\setminus \ACq_H^{2}(\Om,\R)$.

\vspace{2mm}

By  Theorem~\ref{1capH} and Claim 3, it is enough to show that $f$ is  in $1\po\ACq^{2}(\Om,\R)$.

To see this, first note that, by an adaptation of \cite[Lemma 3]{Csornyei2000}, for every 1-regular interval $I=[\mathbf{a},\mathbf{b}]$, there exists an index $m=m(I)$, so that, 
$$
|f(\mathbf{a})-f(\mathbf{b})|^2 \leq 16 |f_{m(I)}(\mathbf{a})-f_{m(I)}(\mathbf{b})|^2.
$$
Let
$$
\mathcal{D}_1=
\left\{ I=[\mathbf{a},\mathbf{b}]:I\text{ is }1\po\text{regular and }|f_{m(I)}(\mathbf{a})-f_{m(I)}(\mathbf{b})| \leq 9\frac{\om_{m(I)}}{m(I)}\right\},
$$
$$
\mathcal{D}_2=
\left\{I=[\mathbf{a},\mathbf{b}]:I\text{ is }1\po\text{regular and }|f_{m(I)}(\mathbf{a})-f_{m(I)}(\mathbf{b})| > 9\frac{\om_{m(I)}}{m(I)}\right\}.
$$

We will prove that there exist two measures $\mu_1$ and $\mu_2$, absolutely continuous with respect to the Lebesgue measure, such that 
\begin{equation}\label{muone}
|f_{m(I)}(\mathbf{a})-f_{m(I)}(\mathbf{b})|^2 \leq \mu_1([\mathbf{a},\mathbf{b}])
\end{equation}
for each $[\mathbf{a},\mathbf{b}] \in \mathcal{D}_1$, and 
\begin{equation}\label{mutwo}
|f_{m(I)}(\mathbf{a})-f_{m(I)}(\mathbf{b})|^2 \leq \mu_2([\mathbf{a},\mathbf{b}])
\end{equation}
for each $[\mathbf{a},\mathbf{b}] \in \mathcal{D}_2$.\\

If such measures exist, then the absolute continuity of $\mu_1$ and $\mu_2$ implies that for all $\e>0$, there exists $\de>0$, such that for each finite collection of non-overlapping 1-regular intervals $\{[\mathbf{a}_j,\mathbf{b}_j]\}$ with $\LL^2 (\bigcup\limits_{j}^{}[\mathbf{a}_j,\mathbf{b}_j])<\de$,
$$
\mu_1(\bigcup\limits_{j}^{}[\mathbf{a}_j,\mathbf{b}_j]) < \frac{\e}{32}\text{,       }\ \ \mu_2(\bigcup\limits_{j}^{}[\mathbf{a}_j,\mathbf{b}_j]) < \frac{\e}{32}.
$$
 Hence, we obtain 
\begin{align*}
\sum\limits_{j}^{} |f(\mathbf{a}_j)-f(\mathbf{b}_j)|^2 \leq 16\sum\limits_{j}^{} |f_{m([\mathbf{a}_j,\mathbf{b}_j])}(\mathbf{a}_j)-f_{m([\mathbf{a}_j,\mathbf{b}_j])}(\mathbf{b}_j)|^2 \\
 \leq 16\sum\limits_{j}^{}\mu_1([\mathbf{a}_j,\mathbf{b}_j])+16\sum\limits_{j}^{}\mu_2([\mathbf{a}_j,\mathbf{b}_j]) \\
 = 16\mu_1\Big(\bigcup\limits_{j}^{}[\mathbf{a}_j,\mathbf{b}_j]\Big) +16\mu_2\Big(\bigcup\limits_{j}^{}[\mathbf{a}_j,\mathbf{b}_j]\Big) < \e,
\end{align*}
which proves that $f \in 1\po\ACq^{2}(\Om,\R)$.

Thus, to complete the proof, it is enough to prove the existence of measures $\mu_1$ and $\mu_2$. 

\vspace{2mm}

{\bf{Existence of the measure $\mu_1$}}

\vspace{2mm}

For a fixed 1-regular interval $I=[\mathbf{a},\mathbf{b}]\in \mathcal{D}_1$, let $m=m(I)$ be such that $|f(\mathbf{a})-f(\mathbf{b})| \leq 4|f_{m(I)}(\mathbf{a})-f_{m(I)}(\mathbf{b})|$. 
If $|f_m(\mathbf{a})-f_m(\mathbf{b})|=0$, then we can remove this interval $[\mathbf{a},\mathbf{b}]$ without affecting our results. If $|f_m(\mathbf{a})-f_m(\mathbf{b})|>0$, 
then let $I'=[\mathbf{a}',\mathbf{b}']$ be the smallest 1-regular sub-interval of $I$ such that $|f_m(\mathbf{a}')-f_m(\mathbf{b}')| = |f_m(\mathbf{a})-f_m(\mathbf{b})|$. Then $I' \subset Q_{mk}$ for some $k$, and there are three cases:
\begin{itemize}
\item[\textcircled{a}] Both points $\mathbf{a}'$ and  $\mathbf{b}'$ are in $\bigtriangleup B_1C_1D_1$. 

\item[\textcircled{b}] Both points $\mathbf{a}'$ and   $\mathbf{b}'$ are in $\bigtriangleup B_3C_3D_3$. 

\item[\textcircled{c}] Point $\mathbf{b}'$ is in $\bigtriangleup B_3C_3D_3$, and $\mathbf{a}'$ is in $\bigtriangleup B_1C_1D_1$. 
\end{itemize}
In case \textcircled{c}, let $F_1$ be the intersection point of line $\mathbf{a}'\mathbf{b}'$ and the side $B_1C_1$ (or $C_1D_1$), and $I^*=[F_1,\mathbf{b}']\subset I'$. If  $|f_m(F_1)-f_m(\mathbf{b}')| > 9\frac{\om_m}{m}$, then we put this interval $I^*$ into set $\mathcal{D}_2$. If $|f_m(F_1)-f_m(\mathbf{b}')| \leq 9\frac{\om_m}{m}$, then we set $I'=I^*$.
Therefore, combining these cases, \buo, we can assume that $I'\subset \square A_1B_1C_1D_1$. 

For $1\leq i \leq m-9$, we set 
$$
S_i = \Big\{ \mathbf{x}\in \R^2: \rho(\mathbf{x}) \in \bigcup\limits_{j=i}^{i+9} [a_j,b_j]\Big\},
$$
Since $|f_m(\mathbf{a}')-f_m(\mathbf{b}')|\leq 9\frac{\om_m}{m}$, we have $\mathbf{a}',\mathbf{b}'\in S_i$ for some integer  $1\leq i \leq m-9$. Now notice that $f_m$ is Lipschitz on $\bigcup_{j=i}^{i+9} [a_j,b_j]$ with Lipschitz constant 
\begin{align*}
K&=\frac{\om_m/2m}{\frac{1}{3}\cdot\frac{\sqrt{2}}{2}\cdot(b_{i+9}-a_{i+9})} \leq \frac{\om_m/2m}{\frac{1}{3}\cdot\frac{\sqrt{2}}{2}\cdot(d/2^{i+9-1}-d/2^{i+9})} \\
&= 3\cdot 2^{9-1}\cdot \sqrt{2}\cdot \frac{\om_m}{m} \cdot \frac{1}{d/2^i}.
\end{align*}
Therefore
$$
|f_m(\mathbf{a}')-f_m(\mathbf{b}')|^2 \leq 9\cdot 2^{17} \cdot \frac{\om_m^2}{m^2} \cdot \frac{(\diam I')^2}{(d/2^i)^2}.
$$
Since 
$
(\diam I')^2 \leq 2 \LL^2(I')$  and $\LL^2(S_i)< 8\cdot (d/2^i)^2, 
$
we have 
\begin{align*}
|f_m(\mathbf{b}')-f_m(\mathbf{a}')|^2 &\leq 9\cdot 2^{21}\cdot\frac{\om_m^2}{m^2}\cdot\frac{\LL^2(I')}{\LL^2(S_i)}\\
&=\int\limits_{I'}^{}9\cdot2^{21}\cdot\frac{\om_m^2}{m^2}\cdot\frac{1}{\LL^2(S_i)}.
\end{align*}
Thus if we set $\mu_1=\int g$, where
$$
g(\mathbf{x})=9\cdot2^{21}\cdot\sum\limits_{m=1}^\infty \sum\limits_{k=1}^{r_m} \sum\limits_{i=1}^{m-9} \left(\frac{\om_m^2}{m^2}\cdot\frac{\chi_{s_i}(\mathbf{x})}{\LL^2(S_i)}\right)\in L^1(\R^2),
$$
then $\mu_1$ satisfies \eqref{muone} for each interval $[\mathbf{a},\mathbf{b}]\in\mathcal{D}_1$ (cf. \cite[p.~154]{Csornyei2000} for details).

\vspace{2mm}

{\bf Existence of the measure $\mu_2$}

\vspace{2mm}

Let $\mu_2$ be an absolutely continuous measure for which
$$
\mu_2(Q_{mk})=\frac{4}{m\cdot r_m},\ \  m\in \mathbb{N}, k=1,\dots,r_m.
$$
This measure exists because
$$
\sum\limits_{j=1}^{r_{m+1}/r_m} \mu_2(Q_{(m+1)j})=\frac{4}{(m+1)\cdot r_m} < \frac{4}{m\cdot r_m} = \mu_2(Q_{mk}),
$$
and
$$
\sum\limits_{j=1}^{r_m} \mu_2(Q_{mk})=\frac{4}{m} \rightarrow 0.
$$

As before, for a fixed 1-regular interval $I=[\mathbf{a},\mathbf{b}] \in \mathcal{D}_2$, let $m=m(I)$ be such that $|f(\mathbf{a})-f(\mathbf{b})| \leq 4|f_{m(I)}(\mathbf{a})-f_{m(I)}(\mathbf{b})|$. 
Let $I'=[\mathbf{a}',\mathbf{b}']$ be the smallest 1-regular  sub-interval of $I$ such that $|f_m(\mathbf{a}')-f_m(\mathbf{b}')| = |f_m(\mathbf{a})-f_m(\mathbf{b})|$. Then $I' \subset Q_{mk}$ for some $k$, and  by the argument above, \buoo, we can assume that $I'\subset \square A_1B_1C_1D_1$. 

Let 
$$\beta = \frac{|f(\mathbf{a})-f(\mathbf{b})|\cdot m}{\om_m}.$$ 
Since $I\in \mathcal{D}_2$, we have $\beta \geq 9$. 
For simplicity, we can assume that $C_1=(0,0)$. Let $j$ be the smallest integer with $\rho(\mathbf{b}') \geq \frac{d}{2^j}$, and  $i$ be the biggest integer with $\rho(\mathbf{a}')\geq \frac{d}{2^{j+i}}$.
We denote $\mathbf{a}'=(a_1,a_2),\mathbf{b}'=(b_1,b_2)$, 
and, for $1\leq v \leq i$, we set 
$$
S_v=\Big\{\mathbf{x}\in \R^n: \rho(\mathbf{x})\in \Big[\frac{d}{2^{j+v}},\frac{d}{2^{j+v-1}}\Big]\Big\}.
$$

Let $\mathbf{a}_{1t}$ and $\mathbf{a}_{2t}$ be the points on the lines $y=2x$ and $y=x/2$, respectively, so that 
$$
\rho (\mathbf{a}_{1t})=\rho (\mathbf{a}_{2t})= \frac{t}{3}\cdot\frac{d}{2^{j+i}}\text{, with }4\leq t \leq 5.
$$
Let $\mathbf{b}_{1t}$, $\mathbf{b}_{2t}$ and $\mathbf{c}_{1t}$, $\mathbf{c}_{2t}$ be the images of the orthogonal projections of $\mathbf{a}_{1t}$, $\mathbf{a}_{2t}$ onto the horizontal line and onto the vertical line through $C_1$, respectively.
Finally let $\mathbf{d}_t=(\frac{t}{3}\cdot\frac{d}{2^{j+v}},0)\text{, }\mathbf{f}_t=(0,\frac{t}{3}\cdot\frac{d}{2^{j+v}})$.
Since 
$$
|C_1-\mathbf{c}_{1t}|=|\mathbf{a}_{1t}-\mathbf{b}_{1t}|=2|C_1-\mathbf{b}_{1t}|=2|\mathbf{a}_{1t}-\mathbf{c}_{1t}|,
$$
$$
|\mathbf{b}_{1t}-\mathbf{d}_t|=|\mathbf{a}_{1t}-\mathbf{b}_{1t}|\text{, and } |\mathbf{a}_{1t}-\mathbf{c}_{1t}|=|\mathbf{f}_{t}-\mathbf{c}_{1t}|,
$$
we have
\begin{equation}\label{ut1}
\frac{t}{3}\cdot\frac{d}{2^{j+v}}=|C_1-\mathbf{d}_{t}|=3|C_1-\mathbf{b}_{1t}|,
\end{equation}
and
\begin{equation}
\label{ut2}
\frac{t}{3}\cdot\frac{d}{2^{j+v}}=|C_1-\mathbf{f}_{t}|=3|\mathbf{a}_{1t}-\mathbf{c}_{1t}|.
\end{equation}
Moreover, since
$$
|\mathbf{a}_{2t}-\mathbf{c}_{2t}|=|C_1-\mathbf{b}_{2t}|=2|\mathbf{a}_{2t}-\mathbf{b}_{2t}|=2|C_1-\mathbf{c}_{2t}|,
$$
$$
|\mathbf{a}_{2t}-\mathbf{b}_{2t}|=|\mathbf{b}_{2t}-\mathbf{d}_{t}|\text{, and } |\mathbf{c}_{2t}-\mathbf{f}_{t}|=|\mathbf{a}_{2t}-\mathbf{c}_{2t}|,
$$
we have
\begin{equation}
\label{ut3}
\frac{t}{3}\cdot\frac{d}{2^{j+v}}=|C_1-\mathbf{d}_{t}|=3|\mathbf{a}_{2t}-\mathbf{b}_{2t}|,
\end{equation}
and
\begin{equation}\label{ut4}
\frac{t}{3}\cdot\frac{d}{2^{j+v}}=|C_1-\mathbf{f}_{t}|=3|C_1-\mathbf{c}_{2t}|.
\end{equation}
Therefore, if $v \leq i-1$, by \eqref{ut1}, 
$$
|\mathbf{a}_{1t}-\mathbf{b}_{1t}|=2\cdot|C_1-\mathbf{b}_{1t}|=\frac{t}{3^2}\cdot\frac{d}{2^{j+v-1}} > \frac{4}{3^2}\cdot\frac{d}{2^{j+v-1}} > \frac{d}{2^{j+v-1}}\geq \frac{d}{2^{j+i}} > a_2,
$$
and, by \eqref{ut4}, 
$$
|\mathbf{a}_{2t}-\mathbf{c}_{2t}|=2\cdot|C_1-\mathbf{c}_{2t}|=\frac{t}{3^2}\cdot\frac{d}{2^{j+v-1}} > \frac{4}{3^2}\cdot\frac{d}{2^{j+v-1}} > \frac{d}{2^{j+v-1}}\geq \frac{d}{2^{j+i}} > a_1.
$$
Moreover, if $v \leq i-2$, by \eqref{ut3}  
$$
|\mathbf{a}_{2t}-\mathbf{b}_{2t}|=\frac{1}{3}\cdot|C_1-\mathbf{d}_{t}|=\frac{t}{3^2}\cdot\frac{d}{2^{j+v}} > \frac{4}{3^2}\cdot\frac{d}{2^{j+v}} > \frac{d}{2^{j+v+2}}\geq \frac{d}{2^{j+i}} > a_2,
$$
and, by  \eqref{ut2}, 
$$
|\mathbf{a}_{1t}-\mathbf{c}_{1t}|=\frac{1}{3}\cdot|C_1-\mathbf{f}_{t}|=\frac{t}{3^2}\cdot\frac{d}{2^{j+v}} > \frac{4}{3^2}\cdot\frac{d}{2^{j+v}} > \frac{d}{2^{j+v}}\geq \frac{d}{2^{j+i}} > a_1.
$$

Thus, to see that the points $\mathbf{a}_{1t},\mathbf{a}_{2t}$ belong to the interval $I'$, it suffices to show that 
$$
|\mathbf{a}_{1t}-\mathbf{b}_{1t}|<b_2\text{ and }|\mathbf{a}_{2t}-\mathbf{b}_{2t}|<b_2;
$$
$$
|\mathbf{a}_{1t}-\mathbf{c}_{1t}|<b_1\text{ and }|\mathbf{a}_{2t}-\mathbf{c}_{2t}|<b_1.
$$
First of all, we have
$$
b_1>\frac{d}{2^{j+3}} \text{ and } b_2>\frac{d}{2^{j+3}}.
$$

Indeed, if $b_1\leq \frac{d}{2^{j+3}}$, then $b_2>\frac{d}{2^{j+1}}$. Thus
$$
b_2-a_2>\frac{d}{2^{j+1}}-\frac{d}{2^{j+i}}>\frac{d}{2^{j+3}}\geq b_1 > b_1-a_1,
$$
which  contradicts  1-regularity of $[\mathbf{a}',\mathbf{b}']$.
Similarly  $b_2\leq \frac{d}{2^{j+3}}$.

Therefore, if $v\geq 4$, we have 
\begin{align*}
&|\mathbf{a}_{1t}-\mathbf{b}_{1t}|<\frac{5}{3^2}\cdot\frac{d}{2^{j+v-1}} < \frac{d}{2^{j+v-1}}\leq \frac{d}{2^{j+3}} < b_2;\\
&|\mathbf{a}_{2t}-\mathbf{b}_{2t}|<\frac{5}{3^2}\cdot\frac{d}{2^{j+v}} < \frac{d}{2^{j+v}}\leq \frac{d}{2^{j+3}} < b_2;\\
&|\mathbf{a}_{1t}-\mathbf{c}_{1t}|<\frac{5}{3^2}\cdot\frac{d}{2^{j+v}} < \frac{d}{2^{j+v}}\leq \frac{d}{2^{j+3}} < b_1;\\
&|\mathbf{a}_{1t}-\mathbf{c}_{1t}|<\frac{5}{3^2}\cdot\frac{d}{2^{j+v-1}} < \frac{d}{2^{j+v-1}}\leq \frac{d}{2^{j+3}} < b_1.
\end{align*}

In conclusion, we have proved that, if $4\leq v \leq i-2$, then the interval $I'$ contains the points $\mathbf{a}_{1t},\mathbf{a}_{2t}$ for all $4\leq t \leq 5$. Thus $I'$ contains the intervals $[\mathbf{a}_{14},\mathbf{a}_{15}] $ and $[\mathbf{a}_{24},\mathbf{a}_{25}] $. Therefore, $I'$ covers at least $(i-5)m/2$ of the $r_{m+1}$ squares $Q_{(m+1)h}$, $1\leq h \leq r_{m+1}$. Thus
$$
\mu_2([\mathbf{a}',\mathbf{b}'])\geq (i-5)m\cdot\frac{4}{(m+1)r_{m+1}}=(i-5)\cdot\frac{4m}{(m+1)^2}\cdot\om_m^2,
$$
and, since $\beta\leq m$ and $\beta\leq i+2$, we have
\begin{align*} 
|f(\mathbf{a}')-f(\mathbf{b}')|^2 =\frac{\beta^2}{m^2}\cdot\om_m^2\leq \frac{\beta^2}{m^2}\cdot\frac{(m+1)^2}{4m(i-5)}\cdot\mu_2([\mathbf{a}',\mathbf{b}'])\\
< \frac{(i+2)(m+1)^2}{4(i-5)m^2}\cdot\mu_2([\mathbf{a}',\mathbf{b}']).
\end{align*}
Moreover, since $m\geq 9$, we have $5m^2-22m+7>0$. Since $i\geq 9$, we have $i(3m^2-2m+1)>22m^2+4m+2$. Thus, $(i+2)(m+1)^2<2(i-5)m^2$, and hence 
$$
|f(\mathbf{a}')-f(\mathbf{b}')|^2<\mu_2([\mathbf{a}',\mathbf{b}']).
$$
\end{proof}

\begin{ack}
We thank Jan Mal\'y for bringing this problem to our attention and  Olga Maleva for helpful discussions.
\end{ack}


\def\cprime{$'$}

\end{document}